\newcommand{\nc}{\newcommand}
\nc{\G}{{\Gamma}}
\nc{\BC}{{\mathbb C}}
\nc{\BQ}{{\mathbb Q}}
\nc{\BR}{{\mathbb R}}
 \nc{\BZ}{{\mathbb Z}}
 \nc{\BP}{{\mathbb P}}
\nc{\BN}{{\mathbb N}}
\nc{\BM}{{\mathbb M}}
\nc{\fH}{{\mathfrak{H}}}
\nc{\PSL}{{\mbox{PSL}_2(\BR)}}
\nc{\PS}{{\mbox{PSL}_2(\BZ)}}
\nc{\SL}{{\mbox{SL}_2(\BZ)}}
\nc{\SSL}{{\mbox{SL}_2(\BR)}}
\nc{\PSLL}{{\mbox{PSL}_2(\BC)}}
\nc{\s}{{\mathfrak{sl}(2)}}
\nc{\GL}{{\mbox{GL}_2^+(\BQ)}}
\newtheorem{numbered}{}[section]
\newtheorem{thm}[numbered]{Th\'eor\`eme}
\newtheorem{lem}[numbered]{Lemme}
\newtheorem{rem}[numbered]{Remarque}
\newtheorem{prop}[numbered]{Proposition}
\newtheorem{cor}[numbered]{Corollaire}
\numberwithin{equation}{section}
\begin{document}
\title[]{ Sur deux formules de Frobenius et Stickelberger\\ et inversion de Lagrange  }
\author[]{Roger Gay \& Marcel Grang\'e \& Ahmed Sebbar}
\address{Institut de Math\'ematiques de Bordeaux,
Universit\'e Bordeaux 1, 351 cours de la Lib\'eration F-33405
Talence cedex}
\email{Roger.Gay@math.u-bordeaux1.fr}\;  \email{mgrange@math.u-bordeaux1.fr} \; \email{ahmed.sebbar@math.u-bordeaux1.fr }

\keywords{Formule d'inversion de Lagrange, Formule de Fa\`a di Bruno, polyn\^omes de Bell  }
\begin{abstract}
 Nous pr\'esontons une preuve et une extension de deux formules de Frobenius et Stickelberger ainsi que des d\'eveloppements bas\'es sur la formule d'inversion de Lagrange.
\end{abstract}

 \maketitle

Le point de d\'epart du pr\'esent travail est les deux formules, utilis\'ees par Frobenius et Stickelberger dans leur important travail sur les fonctions elliptiques \cite{F1}. Ces deux formules
rel\`event du calcul diff\'erentiel pur et s'\'enoncent ainsi: Soient $U,V$ deux fonctions $n$-fois  contin\^ument diff\'erentiable sur un intervalle $J\subset \BR$, on a:
\begin{equation} \label{1}
D^{(n)} (V)= \sum_{0 \leq p \leq n} {n \atopwithdelims() p}  \frac{1}{p+1} D^p (U^{p+1}) D^{(n-p)} \left(\frac{V}{U^{p+1}} \right)
\end{equation}
et
\begin{equation} \label{2}
D^{(n)}(V)= \sum_{0 \leq p \leq n} (-1)^p {n \atopwithdelims() p}  \frac{1}{p+1} U^{-p-1} D^{(n-p)} (VD^p U^{p+1} ).
\end{equation}

Le calcul diff\'erentiel comporte diverses formules int\'eressantes \cite{F0}, notamment faisant
intervenir le produit de fonctions. La plus connue est la formule
de Leibniz donnant la d\'eriv\'ee d'ordre $n$ d'un produit. La formule d'inversion de Lagrange, \cite{F3} et \cite{F5}, poss\`ede de nombreuses applications dont la plus embl\'ematique est  la fonction arbre
\[a(x)= \sum_{n=1}^{\infty} n^{n-1}\frac{x^n}{n!}   \]
qui r\'esout l'\'equation $ \displaystyle a(x)= xe^{a(x)}   $. Cette fonction a beaucoup d'applications combinatoires et est souvent donn\'ee \`a l'aide de la classique fonction de Weber 
\[\displaystyle W(x)= -a(-x),\, W(x)e^{W(x)}=x.  \]   De la formule
d'inversion de Lagrange  on a pu d\'eduire ce qu'il
est convenu d'appeler la  formule du produit de Lagrange,
qui g\'en\'eralise la formule de Leibniz. Par ailleurs Frobenius et Stickelberger
indiquent les \'egalit\'es \eqref{1} et \eqref{2} (en bas de page de \cite{F1}, sans d\'emonstration) et d'autres  
 dont l'aspect rappelle encore la formule de Leibniz, sans toutefois
pouvoir se r\'eduire \`a cette derni\`ere. 

Le travail pr\'esent\'e ici propose de d\'emontrer ces diverses formules
en utilisant un m\^eme outil: les applications bilin\'eaires
$\Phi$, introduites et \'etudi\'ees dans la premi\`ere section.
La deuxi\`eme section est d\'evolue \`a deux g\'en\'eralisations des formules
de Frobenius-Stickelberger \cite{F1} et une application aux
fonctions enti\`eres de type exponentiel. La troisi\`eme section reprend
donc la formule du produit de Lagrange \`a partir des applications bilin\'eaires
$\Phi$, et gr\^ace \`a des calculs alg\'ebriques sur des fonctions et leurs
d\'eriv\'ees successives, \'evidemment sans utiliser la formule
d'inversion de Lagrange. Enfin la derni\`ere et quatri\`eme section traite
d'un r\'esultat de P. J. Olver qui a \'et\'e, nous semble t-il, succinctement avanc\'e dans
\cite{F4}, qui est ici int\'egralement d\'emontr\'e, notamment \`a
l'aide de la formule du produit de Lagrange. Ainsi on constate que
rien de ce qui est avanc\'e ici dans le cadre de la variable complexe,
ne d\'epend de la formule int\'egrale de Cauchy, contrairement \`a \cite{F5}: seule la th\'eorie des
s\'eries enti\`eres est utilis\'ee. 

Les concepts et notations sont assez courants, toutefois il est peut-\^etre utile 
de donner les pr\'ecisions qui suivent. Tous les intervalles consid\'er\'es
dans cette \'etude contiennent au mois deux points distincts. La fonction
d\'eriv\'ee d'une fonction complexe $f$ d\'efinie et d\'erivable sur un intervalle
est not\'ee $Df$, et les \'eventuelles d\'eriv\'ees successives sont not\'ees
$D^{k}f$. Un espace vectoriel complexe est aussi un espace vectoriel
r\'eel, et on d\'esigne par $E_{n}$ l'espace de Banach r\'eel
des fonctions complexes de classe $\mathcal{C}^{n}$
sur le segment r\'eel $\bigl[a,b\bigr]$. La norme \'etant : 
\[
\left\Vert f\right\Vert =\sum_{j=0}^{n}\left\Vert D^{j}f\right\Vert _{\left[a,b\right]}
\]
et par $\mathcal{U}_{n}$ l'ouvert de $E_{n}$ 
\[
\mathcal{U}_{n}=\bigl\{ f\in E_{n}\;;\;\forall  t\in\bigl[a,b\bigr],\; f\left(t\right)\neq0\bigr\}.
\]
Cet ouvert  est aussi  connexe car les fonctions de $E_{n}$ sont \`a valeurs
complexes. \'Etant donn\'ee une application $\mathbf{F}$ d\'efinie et diff\'erentiable
sur un ouvert $\Omega$ d'un espace norm\'e r\'eel $E$, \`a valeurs dans
un espace norm\'e r\'eel, sa diff\'erentielle en un point $x$ de $\Omega$
est not\'ee dans ce contexte $\mathfrak{D}\mathbf{F}\left(x\right)$
de mani\`ere \`a \'eviter la confusion avec l'op\'eration $D$ ci-dessus d\'ecrite. 

Pour tout couple $\left(n,p\right)\in \BN \times \BN^*$ on d\'esigne par $E\left(n,p\right)$
l'ensemble 
\[
\bigl\{\alpha=\left(\alpha_{1},\ldots,\alpha_{p}\right)\in\mathbb{N}^{p}\;;\;\alpha_{1}+\cdots+\alpha_{p}=n\bigr\}.
\]
Si le couple d'entiers v\'erifie en outre la condition $0\leq p\leq n$, on d\'esigne, comme d'habitude, par $\dbinom{n}{p}$
le nombre entier $\dfrac{n!}{p!\left(n-p\right)!}$, qui est inf\'erieur
ou \'egal \`a $2^{n}$ en vertu de la formule du bin\^ome. L'ensemble $E\left(n,p\right)$
est de cardinal $\dbinom{n+p-1}{p-1}$. 

Les notations de la quatri\`eme section, plus sp\'ecifiques \`a celle-ci,
sont rappel\'ees ou introduites au d\'ebut de cette derni\`ere section. 

\section{Les applications bilin\'eaires $\Phi$ }

\'Etant donn\'es un entier naturel $n$ et un intervalle $J\subset \BR$,
on consid\`ere une fonction complexe $u$ de classe $\mathcal{C}^{n}$
 et ne s'annulant pas sur $J$. \`A cette fonction $u$ est attach\'ee
l'application bilin\'eaire $\Phi_{n,u}$ d\'efinie sur l'espace vectoriel
complexe des fonctions de classe $\mathcal{C}^{n}$ sur 
$J$, \`a valeurs dans l'espace vectoriel complexe des fonctions continues
sur $J$: 
\begin{equation}\label{eq}
\Phi_{n,u}\left(f,g\right)=\sum_{p=0}^{n}\dbinom{n}{p}D^{p}\left(u^{p}f\right)D^{n-p}\left(u^{-p}g\right).
\end{equation}

\begin{thm}\label{t1}
Pour tout couple $\left(n,u\right)$ comme ci-dessus et pour tout
entier naturel $q$ on a: 
\begin{equation}\label{2.1}
\Phi_{n,u}\left(u^{q},u^{-q}\right)=\Phi_{n,u}\left(1,1\right).
\end{equation}
Pour tout couple $\left(n,u\right)$ comme ci-dessus on a: 
\begin{equation}
\Phi_{n,u}\left(f,g\right)=\sum_{r=0}^{n}\binom{n}{r}\Phi_{n-r,u}\left(1,1\right)D^{r}\left(fg\right)\label{eq2}.
\end{equation}
\end{thm}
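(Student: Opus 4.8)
The plan is to deduce both identities from a single structural property of the bilinear maps: \emph{$\Phi_{n,u}(f,g)$ depends on the pair $(f,g)$ only through the product $fg$}, i.e.\ $\Phi_{n,u}(f,g)=\Phi_{n,u}(1,fg)$ for all $f,g$ of class $\mathcal{C}^{n}$. Granting this, \eqref{2.1} is immediate, since the pair $(u^{q},u^{-q})$ has product $u^{q}u^{-q}=1$, whence $\Phi_{n,u}(u^{q},u^{-q})=\Phi_{n,u}(1,1)$ for every $q\in\BN$.

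To obtain \eqref{eq2} from the same property, I would start from $\Phi_{n,u}(f,g)=\Phi_{n,u}(1,fg)=\sum_{p=0}^{n}\binom{n}{p}D^{p}(u^{p})\,D^{n-p}(u^{-p}fg)$, expand the factor $D^{n-p}(u^{-p}\cdot fg)$ by the Leibniz rule as $\sum_{r=0}^{n-p}\binom{n-p}{r}D^{n-p-r}(u^{-p})\,D^{r}(fg)$, and interchange the summations over $p$ and $r$. Using the elementary identity $\binom{n}{p}\binom{n-p}{r}=\binom{n}{r}\binom{n-r}{p}$ one factors out $\binom{n}{r}D^{r}(fg)$, and the inner sum over $p$ is then exactly $\sum_{p}\binom{n-r}{p}D^{p}(u^{p})D^{(n-r)-p}(u^{-p})=\Phi_{n-r,u}(1,1)$; this gives \eqref{eq2}.

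It remains to establish the product-dependence, which is the heart of the matter. First I would observe that, for each fixed $x$, the number $\Phi_{n,u}(f,g)(x)$ is a fixed polynomial in the finite jet data $\bigl(D^{j}u(x)\bigr)_{0\le j\le n}$, $1/u(x)$, $\bigl(D^{j}f(x)\bigr)_{0\le j\le n}$ and $\bigl(D^{j}g(x)\bigr)_{0\le j\le n}$; hence the asserted equality is a polynomial identity among jets, and since every admissible jet is realized by polynomials it suffices to prove it when $u,f,g$ are real-analytic near $x$. For analytic data I would form the exponential generating series $G(t)=\sum_{n\ge 0}\tfrac{t^{n}}{n!}\Phi_{n,u}(f,g)(x)$, reindex by $(p,n-p)$ and sum the second factor by Taylor's formula, obtaining
\[
G(t)=g(x+t)\sum_{p\ge 0}\frac{t^{p}}{p!}\,u(x+t)^{-p}D^{p}\!\left(u^{p}f\right)(x).
\]
The main obstacle is the evaluation of the remaining sum $S=\sum_{p\ge0}\tfrac{\lambda^{p}}{p!}D^{p}(u^{p}f)(x)$ at $\lambda=t/u(x+t)$: this is a Lagrange--B\"urmann sum attached to the equation $\zeta=x+\lambda\,u(\zeta)$, whose relevant root here is simply $\zeta=x+t$, so that $S=u(x+t)f(x+t)\big/\bigl(u(x+t)-t\,u'(x+t)\bigr)$ and therefore $G(t)=(fg)(x+t)\,u(x+t)\big/\bigl(u(x+t)-t\,u'(x+t)\bigr)$. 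Since $G$ involves $f$ and $g$ only through their product $fg$, comparison of the coefficients of $t^{n}$ yields $\Phi_{n,u}(f,g)(x)=\Phi_{n,u}(1,fg)(x)$, as required.

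If one prefers to avoid Lagrange inversion (natural here, since the Lagrange product formula is to be recovered later from the maps $\Phi$), the same product-dependence can instead be attacked directly by induction on $n$; in that approach the obstacle shifts to producing a workable recursion relating $\Phi_{n,u}$ to $\Phi_{n-1,u}$, the difficulty being that the powers $u^{\pm p}$ interlock the two arguments and do not split cleanly along the Leibniz rule.
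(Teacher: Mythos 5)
Your proposal is correct, but it reaches the theorem by a genuinely different route from the paper. You make the product-dependence $\Phi_{n,u}\left(f,g\right)=\Phi_{n,u}\left(1,fg\right)$ the master fact: you reduce it to the analytic case by the (legitimate) observation that both sides are, at each point, one fixed polynomial in the jets of $u,f,g$ and $u\left(x\right)^{-1}$, every admissible jet being realized by polynomial data; then you sum the exponential generating function $G\left(t\right)=\sum_{n}\frac{t^{n}}{n!}\Phi_{n,u}\left(f,g\right)\left(x\right)$ in closed form via Lagrange--B\"urmann applied to $\zeta=x+\lambda u\left(\zeta\right)$ with root $\zeta=x+t$ at $\lambda=t/u\left(x+t\right)$, obtaining $G\left(t\right)=\left(fg\right)\left(x+t\right)u\left(x+t\right)/\bigl(u\left(x+t\right)-t\,u'\left(x+t\right)\bigr)$, which visibly depends only on $fg$; all of this checks out, as does your derivation of \eqref{eq2} from product-dependence by a single Leibniz expansion and the identity $\binom{n}{p}\binom{n-p}{r}=\binom{n}{r}\binom{n-r}{p}$, with \eqref{2.1} then trivial. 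The paper goes in the opposite direction and stays entirely elementary: a double Leibniz expansion of both factors $D^{p}\left(u^{p}f\right)$ and $D^{n-p}\left(u^{-p}g\right)$, combined with $\binom{n}{p}\binom{p}{j}\binom{n-p}{k}=\binom{n}{r}\binom{r}{j}\binom{n-r}{p-j}$, yields the recursion \eqref{eq3}, namely $\Phi_{n,u}\left(f,g\right)=\sum_{r}\binom{n}{r}\sum_{j+k=r}\binom{r}{j}\Phi_{n-r,u}\left(u^{j},u^{-j}\right)D^{j}f\,D^{k}g$; formula \eqref{2.1} then follows by induction on $n$ (the terms with $r\geq1$ dying because $\sum_{j+k=r}\binom{r}{j}D^{j}u^{q}\,D^{k}u^{-q}=D^{r}\left(1\right)=0$), and \eqref{eq2} drops out immediately, the product-dependence being recorded only afterwards as the Remarque \ref{r1}. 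This recursion is exactly the ``workable recursion'' whose absence you flag in your closing paragraph. As for what each approach buys: yours is shorter and conceptually transparent, and your jet-reduction correctly extends the analytic computation to $\mathcal{C}^{n}$ data; but it imports Lagrange--B\"urmann, which is precisely what the paper is architected to avoid --- its Sections 3 and 4 derive the Lagrange product formula and Lagrange inversion \emph{from} this theorem, so inside the paper your argument would be circular in spirit (not logically circular, since Lagrange--B\"urmann is independently established, but contrary to the declared program ``sans utiliser la formule d'inversion de Lagrange''). The paper's proof is longer but self-contained, uses nothing beyond Leibniz and binomial identities, and applies directly to $\mathcal{C}^{n}$ functions with no analytic detour.
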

\begin{proof}
\noindent En d\'eveloppant $D^{p}\left(u^{p}f\right)$ et $D^{n-p}\left(u^{-p}g\right)$
par la formule de Leibniz, apr\`es avoir interverti la sommation ${\displaystyle \sum_{p=0}^{n}}$
avec les sommations ${\displaystyle \sum_{0\leq j\leq p}}$ et ${\displaystyle \sum_{0\leq k\leq n-p}}$
de la formule de Leibniz, on obtient: 
\[
\Phi_{n,u}\left(f,g\right)={\displaystyle \sum_{0\leq j,k\leq n}Q_{n}\left(u,j,k\right)D^{j}f\, D^{k}g}
\]
o\`u l'on a pos\'e: 
\[
Q_{n}\left(u,j,k\right)=\left\{ \begin{array}{ccc}
0 & \; si\;\: j+k>n\\
\\
{\displaystyle \sum_{j\leq p\leq n-k}\binom{n}{p}\binom{p}{j}\binom{n-p}{k}}D^{p-j}u^{p}\, D^{n-k-p}u^{-p} & \; si\;\: j+k\leq n.
\end{array}\right.
\]
Par suite on obtient: 
\[
\Phi_{n,u}\left(f,g\right)=\sum_{r=0}^{n}\biggl(\,\sum_{j+k=r}Q_{n}\left(u,j,k\right)D^{j}f\, D^{k}g\biggr).
\]
Or, notant $r=j+k$, et supposant $r\leq n$, on a l'\'egalit\'e: 
\[
\binom{n}{p}\binom{p}{j}\binom{n-p}{k}=\binom{n}{r}\binom{r}{j}\binom{n-r}{p-j},
\]
d'o\`u l'on tire: 
\begin{align*}
Q_{n}\left(u,j,k\right)&= \binom{n}{r}\binom{r}{j}\sum_{q=0}^{n-r}\binom{n-r}{q}D^{q}u^{q+j}\, D^{n-r-q}u^{-q-j}\\
&= \binom{n}{r}\binom{r}{j}\Phi_{n-r,u}\left(u^{j},u^{-j}\right).
\end{align*}
En cons\'equence:
\begin{equation}\label{eq3}
\Phi_{n,u}\left(f,g\right)=\sum_{r=0}^{n}\binom{n}{r}\biggl(\,\sum_{j+k=r}\binom{r}{j}\Phi_{n-r,u}\left(u^{j},u^{-j}\right)D^{j}f\, D^{k}g\biggr).
\end{equation}

\noindent {\em Preuve de la formule \eqref{2.1}}: Pour tout entier naturel $q$ on a imm\'ediatement: 
\[
\Phi_{0,u}\left(u^{q},u^{-q}\right)=u^{q}u^{-q}=1=\Phi_{0,u}\left(1,1\right).
\]
Soit un entier naturel $n$ sup\'erieur ou \'egal \`a $1$, et supposons
la propri\'et\'e  vraie jusqu'au rang $n-1$. Compte-tenu de la formule \eqref{2.1},
pour tout entier naturel $q$ on a: 
\[
\Phi_{n,u}\left(u^{q},u^{-q}\right)=\sum_{r=0}^{n}\binom{n}{r}\biggl(\,\sum_{j+k=r}\binom{r}{j}\Phi_{n-r,u}\left(u^{j},u^{-j}\right)D^{j}u^{q}\, D^{k}u^{-q}\biggr).
\]
Puis, en vertu de l'hypoth\`ese de r\'ecurrence
\[
\Phi_{n,u}\left(u^{q},u^{-q}\right)=\Phi_{n,u}\left(1,1\right)u^{q}u^{-q}+\sum_{r=1}^{n}\binom{n}{r}\Phi_{n-r,u}\left(1,1\right)\biggl(\,\sum_{j+k=r}\binom{r}{j}D^{j}u^{q}\, D^{k}u^{-q}\biggr)
\]
\[
=\sum_{r=0}^{n}\binom{n}{r}\Phi_{n-r,u}\left(1,1\right)\biggl(\,\sum_{j+k=r}\binom{r}{j}D^{j}u^{q}\, D^{k}u^{-q}\biggr)=\sum_{r=0}^{n}\binom{n}{r}\Phi_{n-r,u}\left(1,1\right)D^{r}\left(u^{q}u^{-q}\right),
\]
ce qu'il fallait montrer.

\noindent {  \em Preuve de la formule \eqref{eq2}}: En vertu des formules \eqref{eq3} et \eqref{2.1} et de la formule
de Leibniz, on obtient: 
\[
\Phi_{n,u}\left(f,g\right)=\sum_{r=0}^{n}\binom{n}{r}\biggl(\,\sum_{j+k=r}\binom{r}{j}\Phi_{n-r,u}\left(1,1\right)D^{j}f\, D^{k}g\biggr)=\sum_{r=0}^{n}\binom{n}{r}\Phi_{n-r,u}\left(1,1\right)D^{r}\left(fg\right).
\]
\end{proof}
\begin{rem} \label{r1}
Pour tout couple de couples $\left(f_{1},g_{1}\right)$ et $\left(f_{2},g_{2}\right)$
de fonctions complexes de classe $\mathcal{C}^{n}$ et v\'erifiant $f_{1}g_{1}=f_{2}g_{2}$
on a: 
\[
\Phi_{n,u}\left(f_{1},g_{1}\right)=\Phi_{n,u}\left(f_{2},g_{2}\right)=\Phi_{n,u}\left(1,f_{1}g_{1}\right).
\]
\end{rem}
\section{Une extension des formules de Frobenius-Stickelberger }
\subsection{Les formules}
La proposition suivante \'etablit une extension de l'identit\'e \eqref{1} de Frobenius et Stickelberger \cite{F1}
\begin{prop}\label{t2}
Pour tout nombre entier $\lambda$ sup\'erieur ou \'egal \`a $1$, pour
tout triplet $\left(u,v,w\right)$ de fonctions complexes de classe
$\mathcal{C}^{n}$ sur un intervalle $J$, la fonction $u$ ne s'annulant
pas, on a la formule: 
\begin{equation}\label{eq4}
\sum_{p=0}^{n}\dbinom{n}{p}\dfrac{1}{p+\lambda}D^{p}\left(u^{p+\lambda}w\right)D^{n-p}\left(u^{-p-\lambda}v\right)=\sum_{p=0}^{n}\dbinom{n}{p}\dfrac{1}{p+\lambda}D^{p}\left(w\right)D^{n-p}\left(v\right).
\end{equation}
\end{prop}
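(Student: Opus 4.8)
The plan is to expand the left-hand side of \eqref{eq4} by Leibniz, collect terms, and thereby reduce the whole statement to a single scalar identity, which turns out to be nothing but \eqref{eq4} in the special case $v=w=1$.

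First I would apply the Leibniz formula to each factor,
\[
D^{p}\!\left(u^{p+\lambda}w\right)=\sum_{j=0}^{p}\binom{p}{j}D^{p-j}\!\left(u^{p+\lambda}\right)D^{j}w,\qquad D^{n-p}\!\left(u^{-p-\lambda}v\right)=\sum_{k=0}^{n-p}\binom{n-p}{k}D^{n-p-k}\!\left(u^{-p-\lambda}\right)D^{k}v,
\]
interchange the summations exactly as in the proof of Theorem \ref{t1}, and collect the coefficient of $D^{j}w\,D^{k}v$. Using the same trinomial identity $\binom{n}{p}\binom{p}{j}\binom{n-p}{k}=\binom{n}{r}\binom{r}{j}\binom{n-r}{p-j}$ (with $r=j+k$) together with the substitution $q=p-j$, this coefficient becomes $\binom{n}{r}\binom{r}{j}\,\Psi_{n-r}(j+\lambda)$, where I set
\[
\Psi_{m}(\mu):=\sum_{q=0}^{m}\binom{m}{q}\frac{1}{q+\mu}D^{q}\!\left(u^{q+\mu}\right)D^{m-q}\!\left(u^{-q-\mu}\right).
\]
Now $\Psi_{m}(\mu)$ is precisely the left-hand side of \eqref{eq4} evaluated at $v=w=1$, at level $m$ and with parameter $\mu$; since $D^{q}(1)=0$ for $q\geq1$, the corresponding right-hand side of \eqref{eq4} equals $0$ for $m\geq1$ and $1/\mu$ for $m=0$. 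Hence the whole Proposition follows once one knows the scalar lemma $\Psi_{m}(\mu)=0$ for $m\geq1$ and $\Psi_{0}(\mu)=1/\mu$: granting it, the coefficient $\binom{n}{r}\binom{r}{j}\Psi_{n-r}(j+\lambda)$ vanishes unless $n-r=0$, i.e. unless $j+k=n$, in which case it equals $\binom{n}{j}\frac{1}{j+\lambda}$; summing over $j+k=n$ reconstructs exactly $\sum_{j}\binom{n}{j}\frac{1}{j+\lambda}D^{j}w\,D^{n-j}v$, while all contributions with $j+k<n$ drop out.

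It remains to prove the lemma, and this is where the real difficulty lies, since the weight $\frac{1}{q+\mu}$ prevents $\Psi_m$ from being an unweighted $\Phi$, so neither \eqref{2.1} nor \eqref{eq2} applies directly. The mechanism is transparent for $u=e^{x}$: there $D^{q}(u^{q+\mu})=(q+\mu)^{q}u^{q+\mu}$, each summand collapses to $(-1)^{m-q}(q+\mu)^{m-1}$, and
\[
\Psi_{m}(\mu)=\sum_{q=0}^{m}\binom{m}{q}(-1)^{m-q}(q+\mu)^{m-1}
\]
is the $m$-th finite difference at $0$ of the polynomial $q\mapsto(q+\mu)^{m-1}$ of degree $m-1<m$, hence $0$. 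For a general nonvanishing $u$, writing $\Lambda_{q}(a)=u^{-a}D^{q}(u^{a})$ (a polynomial in $a$ of degree $q$, with no constant term for $q\geq1$) gives $\Psi_{m}(\mu)=\sum_{q}\binom{m}{q}\frac{1}{q+\mu}\Lambda_{q}(q+\mu)\Lambda_{m-q}(-(q+\mu))$, which exhibits $\Psi_{m}(\mu)$ as a universal polynomial identity in $\mu$ and the successive derivatives of $\log u$. Being universal, it is enough to establish it for analytic $u$ (a point one must not skip, since here $u$ is merely of class $\mathcal{C}^{n}$), where I would compute the exponential generating function $\sum_{m\geq0}\Psi_{m}(\mu)\frac{z^{m}}{m!}$: summing the inner series by the Taylor identity $\sum_{k}\frac{z^{k}}{k!}D^{k}h=h(\cdot+z)$ and the outer one by Lagrange inversion, one expects it to collapse to the constant $1/\mu$, which is equivalent to $\Psi_{m}(\mu)=0$ for $m\geq1$. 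The main obstacle is exactly this collapse: taming, for arbitrary $u$, the coupling between the order of differentiation $q$ and the exponent $q+\mu$, which is the combinatorial heart of the Frobenius--Stickelberger identities.
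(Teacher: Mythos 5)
Your Leibniz reduction is correct and genuinely isolates the difficulty: the trinomial identity with $r=j+k$ and the substitution $q=p-j$ do show that the left-hand side of \eqref{eq4} equals
\[
\sum_{r=0}^{n}\binom{n}{r}\sum_{j+k=r}\binom{r}{j}\,\Psi_{n-r}\left(j+\lambda\right)D^{j}w\,D^{k}v,
\]
so the Proposition is equivalent to your scalar lemma $\Psi_{m}\left(\mu\right)=0$ for $m\geq1$ (and integer $\mu\geq1$ suffices, since $\mu=j+\lambda$). But that lemma is exactly where your proof stops: you verify it for $u=e^{x}$, reduce to analytic $u$ by universality (legitimate, since $\Psi_{m}\left(\mu\right)$ is a polynomial identity in $\mu$ and the jet of $u$ at a point), and then only assert that the generating function ought to collapse to $1/\mu$ via Lagrange inversion --- you concede as much yourself (``the main obstacle is exactly this collapse''). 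The gap is essential, not a routine verification: no induction \`a la \eqref{2.1} can close it, because specializing your own reduction to $v=w=1$ returns the tautology $\Psi_{n}\left(\lambda\right)=\Psi_{n}\left(\lambda\right)$, and since the parameter $\mu=j+\lambda$ of the inner coefficient depends on $j$, the sums over $j+k=r$ no longer recombine through Leibniz into $D^{r}\left(1\right)=0$ as they do in the proof of Theorem \ref{t1}. (Even if executed, importing Lagrange inversion would also invert the paper's program, which derives Lagrange-type formulas from \eqref{eq4} rather than conversely --- a methodological rather than logical objection, but worth noting.)

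The paper avoids your scalar lemma altogether by a trick you should see: it views the left-hand side of \eqref{eq4} as a map $\mathbf{G}_{n,v,w}$ of the variable $u$ on the open connected set $\mathcal{U}_{n}\subset E_{n}$ of nonvanishing functions, and differentiates with respect to $u$. The differential of $u\longmapsto u^{\alpha}$ being $h\longmapsto\alpha\,u^{\alpha-1}h$, the factors $p+\lambda$ produced by differentiation cancel precisely the weights $\dfrac{1}{p+\lambda}$ that block your combinatorics, and one gets $\mathfrak{D}\mathbf{G}_{n,v,w}\left(u\right)\cdot h=\Phi_{n,u}\left(hu^{\lambda-1}w,u^{-\lambda}v\right)-\Phi_{n,u}\left(u^{\lambda}w,hu^{-\lambda-1}v\right)$, which vanishes by Theorem \ref{t1} and Remark \ref{r1} because both pairs have the same product $hu^{-1}vw$. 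Hence $\mathbf{G}_{n,v,w}$ is constant on $\mathcal{U}_{n}$ (connected because the functions are complex-valued), and evaluation at $u=1$ gives \eqref{eq4}. In short, the coupling between the order of differentiation and the exponent, which you correctly identified as the heart of the matter, is tamed not combinatorially but by a derivative in the function variable $u$; if you want to salvage your approach, the same differentiation argument applied to $\Psi_{m}$ would prove your lemma, but then the detour through the Leibniz reduction becomes unnecessary.
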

\begin{proof} Il suffit de d\'emontrer le r\'esultat lorsque l'intervalle $J$ est un
segment $\bigl[a,b\bigr]$. 

\'Etant donn\'e un couple $\left(v,w\right)$
de fonctions de l'espace de Banach $E_{n}$, on consid\`ere l'application
\begin{equation*}
\begin{split}
\mathbf{G}_{n,v,w}&:\mathcal{U}_{n}\longrightarrow E_{0}\\
\mathbf{G}_{n,w,v}\left(u\right)&=\sum_{p=0}^{n}\dbinom{n}{p}\dfrac{1}{p+\lambda}D^{p}\left(u^{p+\lambda}w\right)D^{n-p}\left(u^{-p-\lambda}v\right).
\end{split}
\end{equation*}
Or, pour tout entier $\alpha$, l'application $u\longmapsto u^{\alpha}$
est diff\'erentiable sur l'ouvert $\mathcal{U}_{n}$, \`a valeurs dans
$E_{n}$, et sa diff\'erentielle est $h\longmapsto\alpha\, u^{\alpha-1}h$.
Par ailleurs, pour tout entier naturel $q\leq n$ l'application lin\'eaire
$D^{q}$ est continue de $E_{n}$ dans $E_{0}$. Donc l'application
$\mathbf{G}_{n,v,w}$ est diff\'erentiable sur l'ouvert $\mathcal{U}_{n}$
et on a: 
\[
\mathfrak{D}\mathbf{G}_{n,v,w}\left(u\right)=
\sum_{p=0}^{n}\dbinom{n}{p}\Bigl(D^{p}\left(u^{p+\lambda-1}hw\right)D^{n-p}\left(u^{-p-\lambda}v\right)-D^{p}\left(u^{p+\lambda}w\right)D^{n-p}\left(u^{-p-\lambda-1}hv\right)\Bigr).
\]
Introduisant l'application bilin\'eaire $\Phi_{n,u}$, consid\'er\'ee d\'efinie
sur $E_{m}\times E_{m}$ \`a valeurs dans $E_{0}$, on observe l'\'egalit\'e: 
\[
\mathfrak{D}\mathbf{G}_{n,v,w}\left(u\right)\cdot h=\Phi_{n,u}\left(hu^{\lambda-1}w,u^{-\lambda}v\right)-\Phi_{n,u}\left(u^{\lambda}w,hu^{-\lambda-1}v\right)
\]
soit, compte-tenu du th\'eor\`eme \ref{t1}: 
\[
\mathfrak{D}\mathbf{G}_{n,v,w}\left(u\right)\cdot h=\Phi_{n,u}\left(1,hu^{-1}vw\right)-\Phi_{n,u}\left(1,hu^{-1}vw\right)=0.
\]
L'application $\mathbf{G}_{n,v,w}$ est donc constante sur l'ouvert connexe 
$\mathcal{U}_{n}$. Mais il est clair qu'on a: 
\[
\mathbf{G}_{n,v,w}\left(1\right)=\sum_{p=0}^{n}\dbinom{n}{p}\dfrac{1}{p+\lambda}D^{p}\left(w\right)D^{n-p}\left(v\right)
\]
 l'application $\mathbf{G}_{n,v,w}$ est ainsi contante de
valeur ${\displaystyle \sum_{p=0}^{n}\dbinom{n}{p}\dfrac{1}{p+\lambda}D^{p}\left(w\right)D^{n-p}\left(v\right)}$. 
\end{proof}
Par dualit\'e, \`a partir de la formule de la proposition \ref{t2} ci-dessus,
on obtient une seconde formule qui \'etend l'identit\'e \eqref{2} de Frobenius-Stickelberger.
La proposition suivante pr\'ecise cette deuxi\`eme formule, et en avance
aussi une troisi\`eme, diff\'erente malgr\'e les apparences. 
\begin{prop}
Pour tout nombre entier $\lambda\geq1$, pour tout triplet $\left(u,v,w\right)$
de fonctions complexes de classe $\mathcal{C}^{n}$ sur un intervalle
$J$, la fonction $u$ ne s'annulant pas, on a:
\begin{equation}
\sum_{p=0}^{n}\dbinom{n}{p}\dfrac{\left(-1\right)^{p}}{p+\lambda}u^{-p-\lambda}D^{n-p}\left(v\, D^{p}\left(u^{p+\lambda}w\right)\right)= 
\sum_{p=0}^{n}\dbinom{n}{p}\dfrac{\left(-1\right)^{p}}{p+\lambda}D^{n-p}\left(v\, D^{p}w\right)\label{eq5}.
\end{equation}
\begin{equation}
\sum_{p=0}^{n}\dbinom{n}{p}\dfrac{\left(-1\right)^{p}}{p+\lambda}u^{p+\lambda}D^{p}\left(v\, D^{n-p}\left(u^{-p-\lambda}w\right)\right)= 
\sum_{p=0}^{n}\dbinom{n}{p}\dfrac{\left(-1\right)^{p}}{p+\lambda}D^{p}\left(v\, D^{n-p}w\right).\label{eq6}
\end{equation}
\end{prop}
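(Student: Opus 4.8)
Le plan est d'interpréter l'identité \eqref{eq4} comme une égalité entre deux opérateurs différentiels agissant sur l'une des trois fonctions, puis d'en prendre la transposée formelle : c'est le contenu précis de la \emph{dualité} évoquée plus haut. Sur le segment $[a,b]$, tout opérateur différentiel $L=\sum_{m}a_{m}D^{m}$ à coefficients continus est déterminé par son action sur les fonctions de classe $\mathcal{C}^{n}$, puisque $L(e^{st})=(\sum_{m}a_{m}s^{m})\,e^{st}$ permet de lire les coefficients $a_{m}$. On dispose donc d'une transposition formelle $\mathcal{T}$, anti-automorphisme vérifiant $\mathcal{T}(M_{a})=M_{a}$ pour la multiplication $M_{a}$ par une fonction $a$, et $\mathcal{T}(D^{k})=(-1)^{k}D^{k}$ ; ainsi $\mathcal{T}(M_{a}D^{k}M_{b})$, appliqué à une fonction $\varphi$, vaut $(-1)^{k}\,b\,D^{k}(a\varphi)$. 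Deux opérateurs égaux ayant même transposée, il suffira de transposer les deux membres de \eqref{eq4}.

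Pour \eqref{eq5}, je fixerais $u$ et $w$ et lirais \eqref{eq4} comme une égalité d'opérateurs en $v$ : le membre de gauche s'écrit $\sum_{p}\frac{1}{p+\lambda}\binom{n}{p}M_{A_{p}}D^{n-p}M_{u^{-p-\lambda}}$ avec $A_{p}=D^{p}(u^{p+\lambda}w)$, et le membre de droite $\sum_{p}\frac{1}{p+\lambda}\binom{n}{p}M_{D^{p}w}D^{n-p}$. En appliquant $\mathcal{T}$ puis en évaluant sur $v$, chaque terme voit apparaître le facteur $(-1)^{n-p}=(-1)^{n}(-1)^{p}$ ; le signe global $(-1)^{n}$ se met en facteur des deux côtés et se simplifie, ce qui donne exactement \eqref{eq5}, à savoir $\sum_{p}\frac{(-1)^{p}}{p+\lambda}\binom{n}{p}u^{-p-\lambda}D^{n-p}(v\,D^{p}(u^{p+\lambda}w))$ à gauche et $\sum_{p}\frac{(-1)^{p}}{p+\lambda}\binom{n}{p}D^{n-p}(v\,D^{p}w)$ à droite.

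Pour \eqref{eq6}, qui est bien une formule distincte, je transposerais au contraire \eqref{eq4} vue comme égalité d'opérateurs en $w$ : le membre de gauche s'écrit $\sum_{p}\frac{1}{p+\lambda}\binom{n}{p}M_{C_{p}}D^{p}M_{u^{p+\lambda}}$ avec $C_{p}=D^{n-p}(u^{-p-\lambda}v)$. L'ordre de dérivation portant sur $w$ est maintenant $p$, de sorte que $\mathcal{T}$ fait directement apparaître $(-1)^{p}$, sans signe global. On obtient $\sum_{p}\frac{(-1)^{p}}{p+\lambda}\binom{n}{p}u^{p+\lambda}D^{p}(w\,D^{n-p}(u^{-p-\lambda}v))=\sum_{p}\frac{(-1)^{p}}{p+\lambda}\binom{n}{p}D^{p}(w\,D^{n-p}v)$, valable pour tout couple $(v,w)$ ; l'échange des noms $v\leftrightarrow w$ fournit \eqref{eq6}. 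C'est précisément le fait que \eqref{eq5} et \eqref{eq6} soient les transposées de \eqref{eq4} relativement à deux variables différentes qui rend compte de leur différence.

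L'obstacle principal sera de justifier que $\mathcal{T}$ est légitime dans ce cadre de différentiabilité finie. Il faudra vérifier que, dans l'écriture $\sum_{m}a_{m}D^{m}$ de chaque membre, le coefficient $a_{m}$ de $D^{m}$ est de classe $\mathcal{C}^{m}$, afin que $D^{m}(a_{m}\varphi)$ ait un sens ; cela résulte de ce que $u,w\in\mathcal{C}^{n}$ et que $u$ ne s'annule pas, car $D^{p}(u^{p+\lambda}w)$ est de classe $\mathcal{C}^{n-p}$ et la dérivée de $u^{-p-\lambda}$ qui l'accompagne dans le coefficient de $D^{m}v$ (pour $p\leq n-m$) est de classe $\mathcal{C}^{p+m}$, le produit restant de classe $\mathcal{C}^{m}$. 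Ce point acquis, le reste n'est qu'un contrôle des signes et une relecture des indices, et l'on retrouve en particulier l'identité \eqref{2} de Frobenius et Stickelberger en prenant $\lambda=1$ et $w=1$.
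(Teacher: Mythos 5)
Votre d\'emonstration est correcte et repose sur la m\^eme id\'ee de dualit\'e que celle du texte, mais elle la r\'ealise par une voie diff\'erente. La preuve des auteurs (donn\'ee seulement en esquisse, et pour \eqref{eq5} uniquement) est analytique : on int\`egre contre une fonction test $\varphi$ de classe $\mathcal{C}^{\infty}$ \`a support compact, $n-p$ int\'egrations par parties ram\`enent le membre de gauche \`a $(-1)^{n}\int_{J}v\sum_{p=0}^{n}\binom{n}{p}\frac{1}{p+\lambda}D^{p}\left(u^{p+\lambda}w\right)D^{n-p}\left(u^{-p-\lambda}\varphi\right)dt$, on applique \eqref{eq4} avec $\varphi$ \`a la place de $v$ --- ce qui est exactement votre transposition en la variable $v$ ---, on r\'eint\`egre par parties et on conclut par le lemme de du Bois-Reymond. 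Votre version est purement alg\'ebrique : du Bois-Reymond y est remplac\'e par l'identification des coefficients canoniques via les exponentielles $e^{st}$, et les int\'egrations par parties par l'anti-automorphisme $\mathcal{T}$. Ce que vous gagnez : aucune int\'egrale ni fonction test ; un traitement explicite de \eqref{eq6}, que le texte ne d\'emontre pas, avec une explication nette de ce que \eqref{eq5} et \eqref{eq6} sont les transpos\'ees de \eqref{eq4} en deux variables distinctes ; enfin la v\'erification, exacte, que le coefficient de $D^{m}$ est de classe $\mathcal{C}^{m}$ (seuls les indices $p\leq n-m$ contribuent, d'o\`u $n-p\geq m$), point de r\'egularit\'e que l'esquisse du texte n'aborde pas. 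Ce qui vous reste \`a payer : \'etablir que $\mathcal{T}$ est bien d\'efinie sur les op\'erateurs et anti-multiplicative en diff\'erentiabilit\'e finie, autrement dit que la transpos\'ee terme \`a terme de $M_{a}D^{k}M_{b}$, soit $\varphi\longmapsto(-1)^{k}b\,D^{k}(a\varphi)$, co\"{\i}ncide avec la transpos\'ee de la forme canonique --- une v\'erification de type Leibniz, de routine mais non gratuite en classe $\mathcal{C}^{n}$. Observez d'ailleurs que la caract\'erisation $\int_{J}(Lv)\,\varphi\,dt=\int_{J}v\,\mathcal{T}(L)\varphi\,dt$ pour toute $\varphi$ test, jointe au lemme de du Bois-Reymond, fournit cette l\'egitimit\'e d'un seul coup : c'est pr\'ecis\'ement le chemin des auteurs, et les deux preuves apparaissent ainsi comme les faces analytique et alg\'ebrique d'un m\^eme argument.
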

\begin{proof}
Seule est donn\'ee une esquisse de la preuve de la formule \eqref{eq5}. Pour toute fonction
$\varphi$ de classe $\mathcal{C}^{\infty}$ \`a support compact dans
l'intervalle $J$, par $n-p$ int\'egrations par parties et en vertu
de la formule \eqref{eq4} on a: 
\[
\int_{J}\varphi\sum_{p=0}^{n}\dbinom{n}{p}\dfrac{\left(-1\right)^{p}}{p+\lambda}u^{-p-\lambda}D^{n-p}\left(v\, D^{p}\left(u^{p+\lambda}w\right)\right)\, dt=\left(-1\right)^{n}\int_{J}v\sum_{p=0}^{n}\dbinom{n}{p}\dfrac{1}{p+\lambda}D^{p}wD^{n-p}\varphi\, dt
\]
\`A nouveau $n-p$ int\'egrations par parties conduisent \`a l'\'egalit\'e 
\[
\left(-1\right)^{n}\int_{J}v\sum_{p=0}^{n}\dbinom{n}{p}\dfrac{1}{p+\lambda}D^{p}wD^{n-p}\varphi\, dt=\int_{J}\varphi\sum_{p=0}^{n}\dbinom{n}{p}\dfrac{\left(-1\right)^{p}}{p+\lambda}D^{n-p}\left(v\, D^{p}w\right) dt.
\]
Enfin on conclut gr\^ace au lemme de du Bois-Reymond. 
\end{proof}

\subsection{Une application de la premi\`ere formule de Frobenius-Stickelberger}
Choisissons les trois fonctions $u$, $v$ et $w$ comme suit: 
\[
u\left(t\right)=\exp\left(xt\right);\quad v\left(t\right)=\exp\left(yt\right);\quad w\left(t\right)=\exp\left(zt\right)
\]
o\`u $x$, $y$ et $z$ d\'esignent des nombres complexes. Pour tout 
entier $\lambda\geq1$ on obtient: 
\[
\sum_{p=0}^{n}\binom{n}{p}\dfrac{1}{p+\lambda}\left(z+\left(p+\lambda\right)x\right)^{p}\left(y-\left(p+\lambda\right)x\right)^{n-p}=\sum_{p=0}^{n}\binom{n}{p}\dfrac{1}{p+\lambda}z^{p}y^{n-p}.
\]
La diff\'erence des deux membres de cette relation est, pour $\left(x,y,z\right)$
fix\'e, une fonction \textit{fraction rationnelle} en $\lambda$, qui
s'annule sur l'ensemble infini $\mathbb{N}^{*}$. Donc cette fonction
fraction rationnelle est la fonction nulle et on obtient ainsi l'identit\'e:
\begin{equation}\label{eq51}
\sum_{p=0}^{n}\binom{n}{p}\dfrac{1}{p+\lambda}\left(z+\left(p+\lambda\right)x\right)^{p}\left(y-\left(p+\lambda\right)x\right)^{n-p}=\sum_{p=0}^{n}\binom{n}{p}\dfrac{1}{p+\lambda}z^{p}y^{n-p}
\end{equation}
 valable pour tout $\left(x,y,z\right)\in {\BC}^{3}$ et tout
$\lambda \in {\BC}\setminus\left\{ 0,-1,\ldots,-n\right\} $. 
\begin{prop}
Soit une fonction enti\`ere $f$ de type exponentiel. Il existe un voisinage
sym\'etrique convexe compact $A$ de $x=0$ dans $\mathbb{C}$ v\'erifiant: Pour tout $\left(x,y,z\right)$ de $A\times\mathbb{C}^{2}$ et tout
$\lambda$ de $\mathbb{C}\setminus\left\{ 0,-1,\ldots,-n,\ldots\right\} $
on a: 
\begin{equation}\label{eq6}
\sum_{p=0}^{\infty}\dfrac{1}{p+\lambda}\dfrac{\left(z+\left(p+\lambda\right)x\right)^{p}}{p!}D^{p}f\left(y-\left(p+\lambda\right)x\right)= 
\sum_{p=0}^{\infty}\dfrac{1}{p+\lambda}\dfrac{z^{p}}{p!}D^{p}f\left(y\right).
\end{equation}
Pour tout $\left(x,\lambda\right)$ de $A\times\mathbb{C}$ on a:
\begin{equation}\label{eq7}
f\left(\lambda x\right)=f\left(0\right)+\lambda\sum_{p=1}^{\infty}\left(\lambda-p\right)^{p-1}\dfrac{x^{p}}{p!}D^{p}f\left(px\right).
\end{equation}
Pour tout $x$ de $A$ et pour tout entier $m$ sup\'erieur ou \'egal
\`a $1$ on a: 
\begin{equation}\label{eq8}
\dfrac{1}{m!}D^{m}f\left(0\right)=\sum_{p=m}^{\infty}\dbinom{p-1}{m-1}\dfrac{\left(-1\right)^{p-m}p^{p-m}}{p!}x^{p-m}D^{p}f\left(px\right).
\end{equation}
\end{prop}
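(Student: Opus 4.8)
The plan is to establish the three formulas in the order they are stated, deducing each from the preceding material, with the polynomial identity \eqref{eq51} as the engine. That identity is exactly the first formula of the proposition for a monomial: taking $f(w)=w^{m}$ one has $D^{p}f(w)=\frac{m!}{(m-p)!}w^{m-p}$ for $p\le m$ and $0$ otherwise, so that $\frac{1}{p!}D^{p}f$ contributes the binomial coefficient $\binom{m}{p}$ and both sides reduce termwise to the two sides of \eqref{eq51} with $n=m$. Hence the first formula holds for every polynomial, and the whole problem is the passage to an arbitrary entire function of exponential type through its Taylor series $f(w)=\sum_{m}a_{m}w^{m}$, $a_{m}=D^{m}f(0)/m!$.

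First I would carry out this passage. Substituting $D^{p}f(w)=\sum_{m\ge p}a_{m}\frac{m!}{(m-p)!}w^{m-p}$ into the left-hand side of the first formula \eqref{eq6} produces a double series whose $(p,m)$-term, after $\frac{1}{p!}\frac{m!}{(m-p)!}=\binom{m}{p}$, is $\frac{1}{p+\lambda}\binom{m}{p}a_{m}(z+(p+\lambda)x)^{p}(y-(p+\lambda)x)^{m-p}$. Summing over $p$ for fixed $m$ gives, by \eqref{eq51}, precisely $a_{m}\sum_{p=0}^{m}\frac{1}{p+\lambda}\binom{m}{p}z^{p}y^{m-p}$, and summing over $m$ reconstitutes the right-hand side, so everything rests on absolute convergence legitimising the reordering (Fubini). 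Here I would use the characterisation of exponential type by the coefficients: for every $\tau'$ exceeding the type there is $C$ with $|a_{m}|\le C(\tau')^{m}/m!$. Bounding $|z+(p+\lambda)x|$ and $|y-(p+\lambda)x|$ by $|z|+|\lambda|\rho+p\rho$ and $|y|+|\lambda|\rho+p\rho$ for $|x|\le\rho$, summing first over $m=p+k$ (which yields a factor $\frac{(\tau'A_{p})^{p}}{p!}e^{\tau'B_{p}}$) and using $p!\ge(p/e)^{p}$, the general term in $p$ is dominated by $\frac{C'}{|p+\lambda|}\,q^{p}$ with $q=\tau'\rho e\,e^{\tau'\rho}$ and $C'$ depending only on $|y|,|z|,|\lambda|,\rho$. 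Choosing the symmetric convex compact $A=\{|x|\le\rho\}$ with $\rho$ so small that $q<1$ — a condition involving only the type — makes the series geometric and proves \eqref{eq6} for all $(x,y,z)\in A\times\mathbb{C}^{2}$ and all $\lambda$ off the nonpositive integers. This convergence bookkeeping, together with the extraction of $A$ from the type, is the only real obstacle; the rest is algebra.

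For \eqref{eq7} I would specialise \eqref{eq6} to $z=0$: the right-hand side collapses to its $p=0$ term $\lambda^{-1}f(y)$, whence $f(y)=\lambda\sum_{p\ge0}(p+\lambda)^{p-1}\frac{x^{p}}{p!}D^{p}f(y-(p+\lambda)x)$. Replacing $x$ by $-x$ (licit since $A$ is symmetric) and setting $y=-\lambda x$ turns the argument $y+(p+\lambda)x$ into $px$, and a final relabelling $\lambda\mapsto-\lambda$ together with $(-1)^{p}(p-\lambda)^{p-1}=-(\lambda-p)^{p-1}$ gives $f(\lambda x)=\sum_{p\ge0}\lambda(\lambda-p)^{p-1}\frac{x^{p}}{p!}D^{p}f(px)$, the $p=0$ term being $f(0)$. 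This holds a priori for $\lambda$ off the nonnegative integers, but both sides are entire in $\lambda$ for fixed $x\in A$ — the left side obviously, the right side as a locally uniformly convergent series of polynomials in $\lambda$ (same geometric domination) — so the identity persists for all $\lambda\in\mathbb{C}$, which is \eqref{eq7}.

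Finally \eqref{eq8} is the coefficientwise reading of \eqref{eq7} in $\lambda$. Expanding $(\lambda-p)^{p-1}=\sum_{j}\binom{p-1}{j}\lambda^{j}(-p)^{p-1-j}$, so that $\lambda(\lambda-p)^{p-1}=\sum_{k=1}^{p}\binom{p-1}{k-1}(-1)^{p-k}p^{p-k}\lambda^{k}$, and rearranging the resulting double series in $(p,k)$ — again justified on $A$ by the derivative bound $|D^{p}f(px)|\lesssim(\tau')^{p}e^{\tau'p|x|}$ from Cauchy's estimates — the coefficient of $\lambda^{m}$ on the right of \eqref{eq7} is $\sum_{p\ge m}\binom{p-1}{m-1}\frac{(-1)^{p-m}p^{p-m}}{p!}x^{p}D^{p}f(px)$. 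Matching it with the coefficient $\frac{D^{m}f(0)}{m!}x^{m}$ of $\lambda^{m}$ in $f(\lambda x)=\sum_{m}\frac{D^{m}f(0)}{m!}x^{m}\lambda^{m}$ and dividing by $x^{m}$ (the case $x=0$ being covered by the surviving $p=m$ term) gives \eqref{eq8} at once.
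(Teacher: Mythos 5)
Your proof is correct and follows essentially the same route as the paper's: Taylor-expand $f$ at $0$, apply the polynomial identity \eqref{eq51} termwise to the double series, and justify the interchange of summations by the exponential-type coefficient bound together with Stirling's inequality (which produces the compact symmetric convex neighbourhood $A$); then obtain \eqref{eq7} by setting $z=0$, substituting $x\mapsto-x$, $y=-\lambda x$, $\lambda\mapsto-\lambda$, and extending in $\lambda$ by entirety, and \eqref{eq8} by binomially expanding $\lambda\left(\lambda-p\right)^{p-1}$ and identifying the coefficients of $\lambda^{m}$. The only immaterial deviations are that you choose $A$ so that $\tau' e\rho\, e^{\tau'\rho}<1$ strictly, making the dominating series geometric and sparing you both the paper's separate treatment of $x=0$ and its reliance on the $p^{-3/2}$ factor, and that in the second part you perform the $\lambda$-dependent substitution before the analytic continuation in $\lambda$ rather than after, which you correctly justify by the local uniform convergence of the resulting series of polynomials in $\lambda$.
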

\begin{proof}
\noindent {\em D\'emontrons la formule \eqref{eq6}}: Par hypoth\`ese, il existe deux nombres strictement
positifs $C$ et $K$ tels que pour tout entier naturel $n$ on ait
l'in\'egalit\'e $\bigl|D^{n}f\left(0\right)\bigr|\leq K^{n}C$. De l\`a,
on effectue les majorations suivantes:
\begin{align*}
\sum_{n=0}^{\infty}\sum_{p=0}^{n}&\left|\dfrac{1}{n!}D^{n}f\left(0\right)\right|\dbinom{n}{p}\dfrac{1}{\left|p+\lambda\right|}\bigl|z+\left(p+\lambda\right)x\bigr|^{p}\bigl|y-\left(p+\lambda\right)x\bigr|^{n-p}
\\
&\leq  C\sum_{p=0}^{\infty}\dfrac{\bigl|z+\left(p+\lambda\right)x\bigr|^{p}}{p!\left|p+\lambda\right|}\left(\sum_{n=p}^{\infty}K^{n}\dfrac{\bigl|y-\left(p+\lambda\right)x\bigr|^{n-p}}{\left(n-p\right)!}\right)
\\
&\leq  C\sum_{p=0}^{\infty}K^{p}\dfrac{\bigl|z+\left(p+\lambda\right)x\bigr|^{p}}{p!\left|p+\lambda\right|}\exp\left(K\left|y-\left(p+\lambda\right)x\right|\right)
\\
&\leq  C\Bigl(\exp\left(K\left|y-\lambda x\right|\right)\Bigr)\sum_{p=0}^{\infty}K^{p}\dfrac{\bigl|z+\left(p+\lambda\right)x\bigr|^{p}}{p!\left|p+\lambda\right|}\left(\exp\left(K\left|x\right|\right)\right)^{p}.
\end{align*}
Dans le cas $x=0$, la s\'erie ci-dessus est, \'evidemment, convergente.
Dans le cas $x\neq0$, on peut \'ecrire: 
\[
\sum_{p=1}^{\infty}K^{p}\dfrac{\bigl|z+\left(p+\lambda\right)x\bigr|^{p}}{p!\left|p+\lambda\right|}\left(\exp\left(K\left|x\right|\right)\right)^{p}\leq\sum_{p=1}^{\infty}
\left(K^{p}\dfrac{p^{p}\left(\left|x\right|\exp\left(K\left|x\right|\right)\right)^{p}}{p!\left|p+\lambda\right|}\left(1+\dfrac{\left|z+\lambda x\right|}{p\left|x\right|}\right)^{p}\right).
\]
Gr\^ace \`a l'in\'egalit\'e de Stirling : $p!\geq p^{p}e^{-p}\sqrt{2\pi p}$,
on obtient la majoration: 
\[
\sum_{p=1}^{\infty}\left(K^{p}\dfrac{p^{p}\left(\left|x\right|\exp\left(K\left|x\right|\right)\right)^{p}}{p!\left|p+\lambda\right|}
\left(1+\dfrac{\left|z+\lambda x\right|}{p\left|x\right|}\right)^{p}\right)\lesssim\exp\left(\dfrac{\left|z+\lambda x\right|}{\left|x\right|}\right)
\sum_{p=1}^{\infty}\left(eK\left|x\right|\exp\left(K\left|x\right|\right)\right)^{p}p^{-\tfrac{3}{2}}.
\]
En cons\'equence, dans le voisinage sym\'etrique convexe compact 
\[
A=\bigl\{ x\in \BC, \,eK\left|x\right|\exp K\left|x\right|\leq1\bigr\}
\]
du point $x=0$, la s\'erie double 
\[
\sum_{n=0}^{\infty}\left(\sum_{p=0}^{n}\dfrac{1}{n!}D^{n}f\left(0\right)\dbinom{n}{p}\dfrac{1}{p+\lambda}\left(z+\left(p+\lambda\right)x\right)^{p}\left(y-\left(p+\lambda\right)x\right)^{n-p}\right)
\]
est absolument convergente, et en intervertissant les sommes, elle
s'\'ecrit d'une part: 
\[
\sum_{p=0}^{\infty}\dfrac{1}{p+\lambda}\dfrac{\left(z+\left(p+\lambda\right)x\right)^{p}}{p!}D^{p}f\left(y-\left(p+\lambda\right)x\right).
\]
D'autre part, gr\^ace \`a l'identit\'e \eqref{eq51}, et en intervertissant les sommes
elle s'\'ecrit aussi: 
\[
\sum_{n=0}^{\infty}\left(\sum_{p=0}^{n}\dfrac{1}{n!}D^{n}f\left(0\right)\dbinom{n}{p}\dfrac{1}{p+\lambda}z^{p}y^{n-p}\right)=\sum_{p=0}^{\infty}\dfrac{1}{p+\lambda}\dfrac{z^{p}}{p!}D^{p}f\left(y\right)
\]
d'o\`u la conclusion. 

\noindent{\em D\'emontrons la formule \eqref{eq7}}: Faisant $z=0$ dans l'identit\'e \eqref{eq6} et multipliant
par $\lambda$, on obtient: 
\[
f\left(y\right)=f\left(y-\lambda x\right)+\lambda\sum_{p=1}^{\infty}\dfrac{\left(p+\lambda\right)^{p-1}}{p!}x^{p}D^{p}f\left(y-\left(p+\lambda\right)x\right).
\]
Mais le terme g\'en\'eral de cette s\'erie est major\'e comme suit: 
\begin{align*}
&\left|\dfrac{\left(p+\lambda\right)^{p-1}}{p!}x^{p}D^{p}f\left(y-\left(p+\lambda\right)x\right)\right|\\
&\leq C\Bigl(\exp\left(K\left|y-\lambda x\right|\right)\Bigr)K^{p}\dfrac{\left|p+\lambda\right|^{p-1}}{p!}\left(K\left|x\right|\exp\left(K\left|x\right|\right)\right)^{p}
\\
&\lesssim\exp\left(\left|\lambda\right|+K\left|y-\lambda x\right|\right)\left(K\left|x\right|\exp\left(K\left|x\right|\right)\right)^{p}p^{-\tfrac{3}{2}}.
\end{align*}
La s\'erie ci-dessus est donc normalement convergente par rapport \`a
$\lambda$ sur tout compact de $\mathbb{C}$. En cons\'equence la formule
ci-dessus exprimant $f\left(y\right)$ s'\'etend \`a tout triplet $\left(x,y,\lambda\right)$
appartenant \`a $A\times\mathbb{C}\times\mathbb{C}$. La formule \eqref{eq7}
s'obtient en faisant $y=\lambda x$ et en rempla\c{c}ant $\left(x,\lambda\right)$
par $\left(-x,-\lambda\right)$.

\noindent{\em D\'emontrons la formule \eqref{eq8}}: En d\'eveloppant $\left(\lambda-p\right)^{p-1}$
par la formule du bin\^ome, et gr\^ace aux majorations effectu\'ees pour
d\'emontrer \eqref{eq7}, il r\'esulte aussi que pour tout $\left(x,\lambda\right)$
appartenant \`a $A\times\mathbb{C}$: 
\[
f\left(\lambda x\right)=f\left(0\right)+\sum_{m=1}^{\infty}\left(-1\right)^{m}\lambda^{m}\left(\sum_{p=m}^{\infty}\dbinom{p-1}{m-1}\dfrac{\left(-1\right)^{p}p^{p-m}}{p!}x^{p-m}D^{p}f\left(px\right)\right).
  \qedhere
\]
\end{proof}
\section{Formule du produit de Lagrange}
Dans cette section et la suivante, pour tout couple $\left(\psi,f\right)$
de fonctions complexes de classe $\mathcal{C}^{n}$ sur un intervalle
$J$, la fonction d\'esign\'ee par $D^{m-1}\left(\psi^{m}D\left(f\right)\right)$ est, pour $m=0$,  la fonction $f=D^{-1}\left(Df\right)$. Mais tout
d'abord il convient de s'assurer du lemme suivant sur les applications
bilin\'eaires $\Phi$. 
\begin{lem}\label{lem1}
Pour tout couple $\left(m,N\right)$ d'entiers naturels et pour toute
fonction complexe $\psi$ de classe $\mathcal{C}^{n}$ sur un intervalle
$J$ et ne s'y annulant pas, on a: 
\begin{equation}\label{eq9}
\Phi_{m,\psi}\left(1,\psi^{N}D\psi\right)=\dfrac{1}{m+1}\Bigl(\Phi_{m+1,\psi}\left(1,\psi^{N+1}\right)-D^{m+1}\left(\psi^{N+1}\right)\Bigr)
\end{equation}
\begin{equation}\label{eq10}
\Phi_{m,\psi}\Bigl(1,\psi^{N}\left(D\psi\right)^{2}\Bigr)=\dfrac{1}{m+1}\Bigl(\Phi_{m+1,\psi}\left(1,\psi^{N+1}D\psi\right)-D^{m+1}\left(\psi^{N+1}D\psi\right)\Bigr)
\end{equation}
\[
=\dfrac{1}{\left(m+1\right)\left(m+2\right)}\Bigl(\Phi_{m+2,\psi}\left(1,\psi^{N+2}\right)-D^{m+2}\left(\psi^{N+2}\right)\Bigr)-\dfrac{1}{m+1}D^{m+1}\left(\psi^{N+1}D\psi\right).
\]
\end{lem}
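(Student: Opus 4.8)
The plan is to extract from both \eqref{eq9} and the first line of \eqref{eq10} a single underlying identity, prove that once, and then read off the second line of \eqref{eq10} by iteration. Concretely, for an arbitrary function $\chi$ of classe $\mathcal{C}^{n}$ I would first establish
\[
\Phi_{m+1,\psi}\left(1,\psi^{N+1}\chi\right)-D^{m+1}\left(\psi^{N+1}\chi\right)=\left(m+1\right)\Phi_{m,\psi}\left(1,\psi^{N}\chi\, D\psi\right).
\]
To do so I would expand $\Phi_{m+1,\psi}\left(1,\psi^{N+1}\chi\right)$ directly from its definition \eqref{eq}. The term $p=0$ equals exactly $D^{m+1}\left(\psi^{N+1}\chi\right)$, so after the subtraction only the indices $1\leq p\leq m+1$ remain; writing $p=q+1$ re-indexes the surviving sum over $0\leq q\leq m$, with $\psi^{N+1-p}=\psi^{N-q}$ and $D^{m+1-p}=D^{m-q}$.

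The key algebraic step is then to combine $D^{q+1}\left(\psi^{q+1}\right)=\left(q+1\right)D^{q}\left(\psi^{q}D\psi\right)$ with the elementary identity $\binom{m+1}{q+1}\left(q+1\right)=\left(m+1\right)\binom{m}{q}$. This rewrites the sum as $\left(m+1\right)\sum_{q=0}^{m}\binom{m}{q}D^{q}\left(\psi^{q}D\psi\right)D^{m-q}\left(\psi^{-q}\cdot\psi^{N}\chi\right)$, which is precisely $\left(m+1\right)\Phi_{m,\psi}\left(D\psi,\psi^{N}\chi\right)$. Since $D\psi\cdot\psi^{N}\chi=1\cdot\psi^{N}\chi\, D\psi$, the remarque \ref{r1} identifies this with $\left(m+1\right)\Phi_{m,\psi}\left(1,\psi^{N}\chi\, D\psi\right)$, completing the boxed identity. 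Taking $\chi=1$ yields \eqref{eq9}, and taking $\chi=D\psi$ yields the first equality of \eqref{eq10} after dividing by $m+1$.

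The second equality of \eqref{eq10} then requires no further computation: I would substitute into the first equality of \eqref{eq10} the instance of \eqref{eq9} obtained by replacing $\left(m,N\right)$ with $\left(m+1,N+1\right)$, namely
\[
\Phi_{m+1,\psi}\left(1,\psi^{N+1}D\psi\right)=\tfrac{1}{m+2}\Bigl(\Phi_{m+2,\psi}\left(1,\psi^{N+2}\right)-D^{m+2}\left(\psi^{N+2}\right)\Bigr),
\]
and collecting terms gives exactly the claimed expression. The only delicate points are the bookkeeping of the re-indexing and the correct invocation of the remarque \ref{r1}; beyond that the argument is a short manipulation of the definition of $\Phi$, entirely independent of the Banach-space differentiation used in the proposition \ref{t2}.
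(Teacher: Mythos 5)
Your proposal is correct and follows essentially the same route as the paper: the paper proves \eqref{eq9} and the first equality of \eqref{eq10} by two verbatim-parallel computations using exactly your ingredients --- the remarque \ref{r1} to pass to $\Phi_{m,\psi}\left(D\psi,\psi^{N}\chi\right)$, the identity $D^{p}\left(\psi^{p}D\psi\right)=\tfrac{1}{p+1}D^{p+1}\left(\psi^{p+1}\right)$, the relation $\dbinom{m}{p}\tfrac{1}{p+1}=\tfrac{1}{m+1}\dbinom{m+1}{p+1}$, an index shift isolating the $q=0$ term $D^{m+1}\left(\psi^{N+1}\chi\right)$ --- and then gets the last line of \eqref{eq10} by substituting \eqref{eq9} at $\left(m+1,N+1\right)$, just as you do. Your only variation is cosmetic: you unify the two computations into a single identity with an auxiliary factor $\chi$ (specialized to $\chi=1$ and $\chi=D\psi$) and run the re-indexing downward from $\Phi_{m+1,\psi}$ to $\Phi_{m,\psi}$, which is the paper's calculation read in reverse.
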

\begin{proof}
{\em D\'emontrons la formule \eqref{eq9}}: Compte-tenu de la remarque \eqref{r1}  on a successivement: 
\begin{align*}
\Phi_{m,\psi}&\left(1,\psi^{N}D\psi\right)=\Phi_{m,\psi}\left(D\psi,\psi^{N}\right)=\sum_{p=0}^{m}\dbinom{m}{p}D^{p}\left(\psi^{p}D\psi\right)D^{m-p}\left(\psi^{N-p}\right)
\\
&=\sum_{p=0}^{m}\dbinom{m}{p}\dfrac{1}{p+1}D^{p+1}\left(\psi^{p+1}\right)D^{m-p}\left(\psi^{N-p}\right)
\\
&=\dfrac{1}{m+1}\sum_{p=0}^{m}\dbinom{m+1}{p+1}D^{p+1}\left(\psi^{p+1}\right)D^{m+1-\left(p+1\right)}\left(\psi^{N-p}\right)
\\
&=-\dfrac{1}{m+1}D^{m+1}\left(\psi^{N+1}\right)+\dfrac{1}{m+1}\sum_{q=0}^{m+1}\dbinom{m+1}{q}D^{q}\left(\psi^{q}\right)D^{m+1-q}\left(\psi^{N+1-q}\right)
\\
&=\dfrac{1}{m+1}\Bigl(\Phi_{m+1,\psi}\left(1,\psi^{N+1}\right)-D^{m+1}\left(\psi^{N+1}\right)\Bigr).
\end{align*}
{\em Montrons la formule \eqref{eq10}}: Compte-tenu de la remarque \eqref{r1}  on a successivement: 
\begin{align*}
\Phi_{m,\psi}&\Bigl(1,\psi^{N}\left(D\psi\right)^{2}\Bigr)= \Phi_{m,\psi}\left(D\psi,\psi^{N}D\psi\right)=\sum_{p=0}^{m}\dbinom{m}{p}D^{p}\left(\psi^{p}D\psi\right)D^{m-p}\left(\psi^{N-p}D\psi\right)
\\
&= \sum_{p=0}^{m}\dbinom{m}{p}\dfrac{1}{p+1}D^{p+1}\left(\psi^{p+1}\right)D^{m-p}\left(\psi^{N-p}D\psi\right)
\\
&= \dfrac{1}{m+1}\sum_{p=0}^{m}\dbinom{m+1}{p+1}D^{p+1}\left(\psi^{p+1}\right)D^{m+1-\left(p+1\right)}\left(\psi^{N-p}D\psi\right)
\\
&= -\dfrac{1}{m+1}D^{m+1}\left(\psi^{N+1}D\psi\right)+\dfrac{1}{m+1}\sum_{q=0}^{m+1}\dbinom{m+1}{q}D^{q}\left(\psi^{q}\right)D^{m+1-q}\left(\psi^{N+1-q}D\psi\right)
\\
&= \dfrac{1}{m+1}\Bigl(\Phi_{m+1,\psi}\left(1,\psi^{N+1}D\psi\right)-D^{m+1}\left(\psi^{N+1}D\psi\right)\Bigr)
\\
&= \dfrac{1}{\left(m+1\right)\left(m+2\right)}\Bigl(\Phi_{m+2,\psi}\left(1,\psi^{N+2}\right)-D^{m+2}\left(\psi^{N+2}\right)\Bigr)-\dfrac{1}{m+1}D^{m+1}\left(\psi^{N+1}D\psi\right)
\end{align*}
en vertu de la formule \eqref{eq9} ci-dessus d\'emontr\'ee.\end{proof}
\begin{thm}
Pour tout triplet $\left(\psi,f,g\right)$ de fonctions complexes
de classe $\mathcal{C}^{n}$ sur un intervalle $J$, on a la formule:
\begin{equation}
\dfrac{1}{n!}D^{n-1}\left(\psi^{n}D\left(fg\right)\right)=\sum_{k=0}^{n}\dfrac{1}{k!}D^{k-1}\left(\psi^{k}D\left(f\right)\right)\dfrac{1}{\left(n-k\right)!}D^{n-k-1}\left(\psi^{n-k}D\left(g\right)\right).\label{eq:3-1}
\end{equation}
 Plus g\'en\'eralement, pour entier naturel $p$ sup\'erieur ou \'egal \`a $2$
et tout syst\`eme $\left(\psi,f_{1},\ldots,f_{p}\right)$ de fonctions
complexes de classe $\mathcal{C}^{n}$ sur un intervalle $J$, on
a la formule:
\[
\dfrac{1}{n!}D^{n-1}\left(\psi^{n}D\left(f_{1}\cdots f_{p}\right)\right)=
\sum_{\alpha_{1}+\cdots+\alpha_{p}=n}\left(\prod_{j=1}^{p}\dfrac{1}{\alpha_{j}!}D^{\alpha_{j}-1}\left(\psi^{\alpha_{j}}D\left(f_{j}\right)\right)\right).
\]
\end{thm}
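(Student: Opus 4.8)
The plan is to isolate the arithmetic heart of the statement, namely the binary case \eqref{eq:3-1}, and then obtain the $p$-ary version from it by a short induction. Throughout I write $L_m(h)=\frac{1}{m!}D^{m-1}(\psi^m Dh)$ for $m\ge 1$ and $L_0(h)=h$, so that \eqref{eq:3-1} is exactly the Cauchy-product (convolution) identity
\[
L_n(fg)=\sum_{k=0}^{n}L_k(f)\,L_{n-k}(g).
\]
Equivalently, setting $M_m=m!\,L_m=D^{m-1}\psi^m D$, it reads $M_n(fg)=\sum_{k=0}^{n}\binom{n}{k}M_k(f)M_{n-k}(g)$, i.e. the exponential generating series $\Lambda(h)=\sum_{m\ge0}L_m(h)\,t^m$ is multiplicative in $h$.

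First I would dispose of the reduction from $p$ functions to two. Granting \eqref{eq:3-1}, the sequence $(L_m(f_1\cdots f_p))_m$ is the convolution of $(L_m(f_1))_m$ with $(L_m(f_2\cdots f_p))_m$; by associativity of the Cauchy product and induction on $p$ it is the $p$-fold convolution of the sequences $(L_m(f_j))_m$, whose $n$-th coefficient is precisely $\sum_{\alpha_1+\cdots+\alpha_p=n}\prod_{j=1}^{p}L_{\alpha_j}(f_j)$. This is the displayed general formula, so the only real work is \eqref{eq:3-1}.

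For the binary identity I would argue by induction on $n$, the cases $n=0,1$ being immediate, since $L_1(fg)=\psi D(fg)=f\psi Dg+g\psi Df$. For the inductive step the idea is to feed the right-hand convolution into the bilinear maps $\Phi_{\cdot,\psi}$ introduced in the first section. Two facts make this possible: the th\'eor\`eme \ref{t1}, via \eqref{eq2}, shows that $\Phi_{m,\psi}(F,G)$ depends only on the product $FG$, so (as in the remarque \ref{r1}) one may freely shuttle powers of $\psi$ and factors of $f,g$ between the two slots; and the elementary identity $\psi^p D\psi=\frac{1}{p+1}D(\psi^{p+1})$ turns each summand $D^{p}(\psi^{p}D\psi)$ into $\frac{1}{p+1}D^{p+1}(\psi^{p+1})$, which is exactly the manipulation underlying the lemme \ref{lem1}. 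Applying the raising relations \eqref{eq9}--\eqref{eq10} then rewrites the $\Phi_{m,\psi}$ pieces in terms of $\Phi_{m+1,\psi}$ together with a pure derivative $D^{m+1}(\psi^{N+1})$; across the induction the $\Phi$-terms telescope while the pure-derivative terms reassemble into $M_n(fg)=D^{n-1}(\psi^n D(fg))$ on the left.

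The hard part will be the bookkeeping of exponents. The operators $L_k$ carry the \emph{unbalanced} powers $\psi^{k}$ and $\psi^{n-k}$ (of total degree $n$), whereas $\Phi_{n,\psi}$ is assembled from the \emph{balanced} powers $\psi^{p}$ and $\psi^{-p}$ of total degree $0$; bridging this gap, i.e. choosing the auxiliary arguments so that the shift produced by $\psi^pD\psi=\frac1{p+1}D(\psi^{p+1})$ lands exactly on the $\Phi_{m+1,\psi}$ configuration demanded by \eqref{eq9}, and then verifying that every $\Phi$-correction cancels, is where the care is needed. It is worth noting that the whole identity has a transparent conceptual meaning, being the statement that $f(w)g(w)=(fg)(w)$ for the branch $w$ attached to $\psi$ by Lagrange--B\"urmann inversion, but the point of the present route is to reach it purely through the algebra of the maps $\Phi$, without invoking the inversion formula.
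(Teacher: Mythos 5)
Your reduction of the $p$-fold formula to the binary case \eqref{eq:3-1} (convolution of the sequences $\left(L_m(f_j)\right)_m$, induction on $p$) is exactly the paper's first step, and you have correctly guessed the paper's toolkit: the maps $\Phi_{m,\psi}$, the th\'eor\`eme \ref{t1} with the remarque \ref{r1}, and the raising identities \eqref{eq9}--\eqref{eq10} of the lemme \ref{lem1}. But as a proof of \eqref{eq:3-1} your text has two genuine gaps. First, the maps $\Phi_{m,\psi}$ involve the negative powers $\psi^{-p}$ and are defined only when $\psi$ has no zero, whereas the theorem is asserted for an arbitrary $\psi$ of class $\mathcal{C}^{n}$; you never reduce to the nonvanishing case. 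The paper does: setting $F=\{\psi=0\}$ and $\Omega=J\setminus F$, both sides of \eqref{eq:3-1} vanish identically on $\overset{\circ}{F}$ (for $n\geq1$ every term contains a factor $D^{m-1}\left(\psi^{m}Dh\right)$ with $m\geq1$, and differentiation is local), the nonvanishing case applies on each component of $\Omega$, and by continuity of both sides the identity propagates to $\overline{\Omega}$; since $J=\overset{\circ}{F}\cup\overline{\Omega}$, it holds on all of $J$. Without some such step you prove a strictly weaker statement.

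The second, more serious gap is that the computation constituting the proof is absent: you defer precisely the point at issue (``the bookkeeping of exponents\dots is where the care is needed''), and the mechanism you conjecture --- induction on $n$ with telescoping of $\Phi$-terms --- is neither substantiated (you never exhibit the right-hand side of \eqref{eq:3-1} as a combination of $\Phi$-values, so there is nothing to telescope) nor what actually occurs. The paper's argument is a single pass with no induction on $n$: substituting $\psi^{k}Df=D\left(\psi^{k}f\right)-k\psi^{k-1}\left(D\psi\right)f$ and its analogue for $g$ splits $n!$ times the right-hand side into four sums $T_{1}+T_{2}-T_{3}-T_{4}$, each of which is, after an index shift, a single $\Phi$-value: $T_{1}=\Phi_{n,\psi}\left(f,\psi^{n}g\right)$, $T_{3}=n\,\Phi_{n-1,\psi}\left(fD\psi,\psi^{n-1}g\right)$, $T_{2}=n(n-1)\,\Phi_{n-2,\psi}\left(fD\psi,\psi^{n-2}gD\psi\right)$, and $T_{4}(f,g)=T_{3}(g,f)$, which coincides with $T_{3}(f,g)$ once reduced. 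Note that this is exactly how the ``unbalanced versus balanced powers'' problem you flag gets solved: by the remarque \ref{r1} only the product of the two slots matters, so the whole weight $\psi^{n}$ is parked in one argument. Formula \eqref{eq2} then rewrites everything as $\sum_{j}C_{n}\left(j,\psi\right)D^{j}\left(fg\right)$, the lemme \ref{lem1} brings all $\Phi$-contributions to the common form $\Phi_{n-j,\psi}\left(1,\psi^{n}\right)$, and their total coefficient $\binom{n}{j}+\frac{n(n-1)}{(n-j)(n-j-1)}\binom{n-2}{j}-\frac{2n}{n-j}\binom{n-1}{j}$ vanishes identically for $j\leq n-2$ --- a one-line binomial identity, not a recursion in $n$. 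The surviving pure-derivative terms give $C_{n}\left(j,\psi\right)=\binom{n-1}{j-1}D^{n-j}\left(\psi^{n}\right)$, and Leibniz reassembles $\sum_{j=1}^{n}\binom{n-1}{j-1}D^{n-j}\left(\psi^{n}\right)D^{j}\left(fg\right)=D^{n-1}\left(\psi^{n}D\left(fg\right)\right)$. Until you produce this decomposition and cancellation, or a worked-out equivalent, the proposal remains a plan rather than a proof.
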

\begin{proof}
La seconde formule s'obtient imm\'ediatement \`a partir de la premi\`ere,
par r\'ecurrence sur l'entier $p$. La d\'emonstration ne concerne donc
que la premi\`ere formule, qui est clairement vraie dans les cas $n=0$
et $n=1$ ; aussi l'entier $n$ est suppos\'e sup\'erieur ou \'egal \`a $2$ dans
ce qui suit. L'id\'ee consiste \`a exprimer le membre de droite de la formule de Lagrange
en fonction des d\'eriv\'ees successives $D^{j}\left(fg\right)$ o\`u $j$
appartient \`a $\left\{ 0,\ldots,n\right\} $. Si on suppose que la
fonction $\psi$ ne s'annule pas, on constate qu'apparaissent les
applications bilin\'eaires $\Phi$ introduites dans la section 2.

\noindent {\em R\'eduction au cas o\`u la fonction $\psi$ ne s'annule pas}:\\
\noindent Ainsi, on suppose la formule de Lagrange pour tout triplet $\left(\varphi,u,v\right)$
de fonctions complexes de classe $\mathcal{C}^{n}$ sur un intervalle
$I$ lorsque la fonction $\varphi$ ne s'annule pas. Soit un triplet
$\left(\psi,f,g\right)$ de fonctions complexes de classe $\mathcal{C}^{n}$
sur un intervalle $J$ et notons $F$ le sous-ensemble ferm\'e $\left\{ \psi=0\right\} $
de $J$. Gr\^ace \`a l'hypoth\`ese, la formule de Lagrange est acquise sur
l'int\'erieur de $F$ qui est une r\'eunion au plus d\'enombrable d'intervalles,
ouverts dans $J$, et pour la m\^eme raison sur le compl\'ementaire $\Omega$
de $F$. La formule de Lagrange est donc vraie en tout point de l'adh\'erence
$\overline{\Omega}$ dans $J$ de $\Omega$. Enfin, sachant qu'on
a $J=\overset{\circ}{F}\cup\overline{\Omega}$ la formule est vraie
sur tout l'intervalle $J$. On suppose dor\'enavant que la fonction $\psi$ ne s'annule
pas. Compte-tenu des relations 
\[
\psi^{k}Df=D\left(\psi^{k}f\right)-\left(k\psi^{k-1}D\psi\right)f,\quad\psi^{n-k}Dg=D\left(\psi^{n-k}g\right)-\left(\left(n-k\right)\psi^{n-k-1}D\psi\right)g
\]
le membre de droite de la formule de Lagrange s'\'ecrit comme la somme
de quatre termes 
\[
T_{1}\left(f,g\right)+T_{2}\left(f,g\right)-T_{3}\left(f,g\right)-T_{4}\left(f,g\right)
\]
qui sont exprim\'es et trait\'es dans ce qui suit.

\noindent {\em Expression du premier terme}:
\begin{align*}
&T_{1}\left(f,g\right)=\sum_{k=0}^{n}\binom{n}{k}D^{k}\left(\psi^{k}f\right)D^{n-k}\left(\psi^{n-k}g\right)=\Phi_{n,\psi}\left(f,\psi^{n}g\right)
\\
&=\sum_{r=0}^{n}\binom{n}{r}\Phi_{n-r,\psi}\left(1,1\right)D^{r}\left(\psi^{n}fg\right)=\sum_{r=0}^{n}\binom{n}{r}\left(\sum_{j=0}^{r}\binom{r}{j}D^{r-j}\left(\psi^{n}\right)D^{j}\left(fg\right)\right)\Phi_{n-r,\psi}\left(1,1\right)
\\
&=\sum_{j=0}^{n}\left(\sum_{r=j}^{n}\binom{n}{r}\binom{r}{j}\Phi_{n-r,\psi}\left(1,1\right)D^{r-j}\left(\psi^{n}\right)\right)D^{j}\left(fg\right)
\\
&=\sum_{j=0}^{n}\binom{n}{j}\left(\sum_{q=0}^{n-j}\binom{n-j}{q}\Phi_{n-j-q,\psi}\left(1,1\right)D^{q}\left(\psi^{n}\right)\right)D^{j}\left(fg\right)=
\sum_{j=0}^{n}\binom{n}{j}\Phi_{n-j,\psi}\left(1,\psi^{n}\right)D^{j}\left(fg\right).
\end{align*}

\noindent {\em Expression du deuxi\`eme terme}: 
\begin{align*}
T_{2}\left(f,g\right)&= \sum_{k=1}^{n-1}\binom{n}{k}k\left(n-k\right)D^{k-1}\left(\psi^{k-1}fD\psi\right)D^{n-k-1}\left(\psi^{n-k}gD\psi\right)
\\
&= n\left(n-1\right)\sum_{k=1}^{n-1}\binom{n-2}{k-1}D^{k-1}\left(\psi^{k-1}fD\psi\right)D^{n-k-1}\left(\psi^{n-k}gD\psi\right)
\\
&= n\left(n-1\right)\sum_{j=0}^{n-2}\binom{n-2}{j}D^{j}\left(\psi^{j}fD\psi\right)D^{n-2-j}\left(\psi^{n-2-j}gD\psi\right)
\\
&= n\left(n-1\right)\Phi_{n-2,\psi}\left(fD\psi,\psi^{n-2}gD\psi\right)
\\
&= n\left(n-1\right)\sum_{r=0}^{n-2}\binom{n-2}{r}\Phi_{n-2-r,\psi}\left(1,1\right)D^{r}\Bigl(\psi^{n-2}fg\left(D\psi\right)^{2}\Bigr)
\\
&= n\left(n-1\right)\sum_{r=0}^{n-2}\binom{n-2}{r}\left(\sum_{j=0}^{r}\binom{r}{j}D^{r-j}\Bigl(\psi^{n-2}\left(D\psi\right)^{2}\Bigr)D^{j}\left(fg\right)\right)\Phi_{n-2-r,\psi}\left(1,1\right)
\\
&= n\left(n-1\right)\sum_{j=0}^{n-2}\left(\sum_{r=j}^{n-2}\binom{n-2}{r}\binom{r}{j}\Phi_{n-2-r,\psi}\left(1,1\right)D^{r-j}\Bigl(\psi^{n-2}\left(D\psi\right)^{2}\Bigr)\right)D^{j}\left(fg\right)
\\
&= n\left(n-1\right)\sum_{j=0}^{n-2}\binom{n-2}{j}\left(\sum_{q=0}^{n-2-j}\binom{n-2-j}{q}\Phi_{n-2-j-q,\psi}\left(1,1\right)D^{q}\Bigl(\psi^{n-2}\left(D\psi\right)^{2}\Bigr)\right)D^{j}\left(fg\right)
\\
&= n\left(n-1\right)\sum_{j=0}^{n-2}\binom{n-2}{j}\Phi_{n-2-j,\psi}\Bigl(1,\psi^{n-2}\left(D\psi\right)^{2}\Bigr)D^{j}\left(fg\right).
\end{align*}
\noindent {\em Expression du troisi\`eme terme}:
\begin{align*}
T_{3}\left(f,g\right)&= \sum_{k=1}^{n}\binom{n}{k}kD^{k-1}\left(\psi^{k-1}fD\psi\right)D^{n-k}\left(\psi^{n-k}g\right)
\\
&= n\sum_{k=1}^{n}\binom{n-1}{k-1}D^{k-1}\left(\psi^{k-1}fD\psi\right)D^{n-k}\left(\psi^{n-k}g\right)
\\
&= n\sum_{j=0}^{n-1}\binom{n-1}{j}D^{j}\left(\psi^{j}fD\psi\right)D^{n-1-j}\left(\psi^{n-1-j}g\right)=n\Phi_{n-1,\psi}\left(fD\psi,\psi^{n-1}g\right)
\\
&= n\sum_{r=0}^{n-1}\binom{n-1}{r}\Phi_{n-1-r,\psi}\left(1,1\right)D^{r}\left(\psi^{n-1}fgD\psi\right)
\\
&= n\sum_{r=0}^{n-1}\binom{n-1}{r}\left(\sum_{j=0}^{r}\binom{r}{j}D^{r-j}\left(\psi^{n-1}D\psi\right)D^{j}\left(fg\right)\right)\Phi_{n-1-r,\psi}\left(1,1\right)
\\
&= n\sum_{j=0}^{n-1}\left(\sum_{r=j}^{n-1}\binom{n-1}{r}\binom{r}{j}\Phi_{n-1-r,\psi}\left(1,1\right)D^{r-j}\left(\psi^{n-1}D\psi\right)\right)D^{j}\left(fg\right)
\\
&= n\sum_{j=0}^{n-1}\binom{n-1}{j}\left(\sum_{q=0}^{n-1-j}\binom{n-1-j}{q}\Phi_{n-1-j-q,\psi}\left(1,1\right)D^{q}\left(\psi^{n-1}D\psi\right)\right)D^{j}\left(fg\right)
\\
&= n\sum_{j=0}^{n-1}\binom{n-1}{j}\Phi_{n-1-j,\psi}\left(1,\psi^{n-1}D\psi\right)D^{j}\left(fg\right).
\end{align*}

%
%

\noindent {\em Expression du quatri\`eme terme}:
\[
T_{4}\left(f,g\right)=\sum_{k=0}^{n-1}\binom{n}{k}\left(n-k\right)D^{k}\left(\psi^{k}f\right)D^{n-k-1}\left(\psi^{n-k-1}gD\psi\right)
\]
\[
=\sum_{p=1}^{n}\binom{n}{p}pD^{p-1}\left(\psi^{p-1}gD\psi\right)D^{n-p}\left(\psi^{n-p}f\right)=\Sigma_{3}\left(g,f\right)=\Sigma_{3}\left(f,g\right).
\]
En cons\'equence le membre de droite de la formule de Lagrange s'\'ecrit
comme suit: 
\[
\sum_{j=0}^{n}C_{n}\left(j,\psi\right)D^{j}\left(fg\right)
\]
o\`u les fonctions coefficients $C_{n}\left(j,\psi\right)$ sont donn\'ees
par: 
\[
C_{n}\left(n,\psi\right)=\Phi_{0,\psi}\left(1,\psi^{n}\right)
\]
\[
C_{n}\left(n-1,\psi\right)=n\Phi_{1,\psi}\left(1,\psi^{n}\right)-2n\Phi_{0,\psi}\left(1,\psi^{n-1}D\psi\right)
\]
et pour tout entier naturel $j\leq n-2$: 
\[
C_{n}\left(j,\psi\right)=\dbinom{n}{j}\Phi_{n-j,\psi}\left(1,\psi^{n}\right)+n\left(n-1\right)\dbinom{n-2}{j}\Phi_{n-2-j,\psi}\Bigl(1,\psi^{n-2}\left(D\psi\right)^{2}\Bigr)
\]
\[
-2n\dbinom{n-1}{j}\Phi_{n-1-j,\psi}\left(1,\psi^{n-1}D\psi\right).
\]
Comme on a en g\'en\'eral $\displaystyle \Phi_{0,\psi}\left(F,G\right)=FG$ \,et\, $\displaystyle \Phi_{1,\psi}\left(F,G\right)=D\left(FG\right)+\dfrac{D\psi}{\psi}FG$,
il vient: 
\[
C_{n}\left(n,\psi\right)=\psi^{n}
\]
\[
C_{n}\left(n-1,\psi\right)=n\left(n-1\right)\psi^{n-1}D\psi=\left(n-1\right)D\left(\psi^{n}\right)
\]
et pour tout entier naturel $j\leq n-2$, en vertu du lemme \eqref{lem1} pr\'ec\'edent: 
\[
C_{n}\left(j,\psi\right)=\dbinom{n}{j}\Phi_{n-j,\psi}\left(1,\psi^{n}\right)+n\left(n-1\right)\dbinom{n-2}{j}\dfrac{1}{\left(n-j-1\right)\left(n-j\right)}\Phi_{n-j,\psi}\left(1,\psi^{n}\right)
\]
\[
-n\left(n-1\right)\dbinom{n-2}{j}\dfrac{1}{\left(n-j-1\right)\left(n-j\right)}D^{n-j}\left(\psi^{n}\right)
\]
\[
-n\left(n-1\right)\dbinom{n-2}{j}\dfrac{1}{n-j-1}D^{n-j-1}\left(\psi^{n-1}D\psi\right)
\]
\[
-2n\dbinom{n-1}{j}\dfrac{1}{n-j}\Phi_{n-j,\psi}\left(1,\psi^{n}\right)+2n\dbinom{n-1}{j}\dfrac{1}{n-j}D^{n-j}\left(\psi^{n}\right)
\]
\[
=\left\{ \begin{array}{cc}
0 & \quad si\quad j=0\\
\\
\dbinom{n-1}{j-1}D^{n-j}\left(\psi^{n}\right) & \quad si\quad1\leq j\leq n-2
\end{array}\right.
\]
En cons\'equence, on aboutit \`a: 
\[
\sum_{k=0}^{n}\binom{n}{k}D^{k-1}\Bigl(\psi^{k}D\left(f\right)\Bigr)D^{n-k-1}\left(\psi^{n-k}D\left(g\right)\right)=\sum_{j=0}^{n}C_{n}\left(j,\psi\right)D^{j}\left(fg\right).
\]
\[
=\sum_{j=1}^{n}\dbinom{n-1}{j-1}D^{n-j}\left(\psi^{n}\right)D^{j}\left(fg\right)=\sum_{k=0}^{n-1}\dbinom{n-1}{k}D^{n-1-k}\left(\psi^{n}\right)D^{k+1}\left(fg\right)=D^{n-1}\left(\psi^{n}D\left(fg\right)\right)
\]
\end{proof}
\begin{cor}\label{t10}
Pour tout couple $\left(\psi,f\right)$ de fonctions complexes de
classe $\mathcal{C}^{n}$ sur un intervalle $J$, et pour tout polyn\^ome
$P\in\mathbb{C}\left[X\right]$ on a la formule:
\begin{equation}\label{eq11}
P\left(\sum_{n=0}^{\infty}\dfrac{1}{n!}D^{n-1}\left(\psi^{n}Df\right)X^{n}\right)=\sum_{n=0}^{\infty}\dfrac{1}{n!}D^{n-1}\left(\psi^{n}D\left(P\left(f\right)\right)\right)X^{n}.
\end{equation}
\end{cor}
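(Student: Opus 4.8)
The plan is to recognize the general product formula of the preceding theorem as the assertion that a single transform sends products of functions to products of formal power series, and then to bootstrap from powers to arbitrary polynomials using linearity. Concretely, to each function $f$ of class $\mathcal{C}^n$ I would attach the formal power series in $X$
\[
\Lambda(f)=\sum_{m=0}^{\infty}\dfrac{1}{m!}D^{m-1}\!\left(\psi^{m}Df\right)X^{m},
\]
with the convention recalled at the opening of this section, so that its $X^{0}$-coefficient is the function $f$ itself. The statement \eqref{eq11} is then exactly $P(\Lambda(f))=\Lambda(P(f))$, read as an identity of formal power series whose degree-$n$ part involves only $\mathcal{C}^{n}$ data.

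Two elementary properties of $\Lambda$ are needed. First, $\Lambda$ is $\mathbb{C}$-linear in $f$, since each coefficient $\tfrac{1}{m!}D^{m-1}(\psi^{m}Df)$ depends linearly on $f$ (the operator $D$ and multiplication by $\psi^{m}$ being linear). Second, $\Lambda(1)=1$: because $D(1)=0$, every coefficient of index $m\geq1$ vanishes, while the $X^{0}$-coefficient is the constant $1$. This second point is what will make the constant term of $P$ behave correctly.

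The crucial input is the general product formula of the preceding theorem, read coefficient by coefficient. The $X^{n}$-coefficient of $\Lambda(f_{1})\cdots\Lambda(f_{p})$ is, by the Cauchy product, $\sum_{\alpha_{1}+\cdots+\alpha_{p}=n}\prod_{j=1}^{p}\tfrac{1}{\alpha_{j}!}D^{\alpha_{j}-1}(\psi^{\alpha_{j}}Df_{j})$, which the theorem identifies with $\tfrac{1}{n!}D^{n-1}(\psi^{n}D(f_{1}\cdots f_{p}))$, the $X^{n}$-coefficient of $\Lambda(f_{1}\cdots f_{p})$. Hence $\Lambda$ is multiplicative,
\[
\Lambda(f_{1}\cdots f_{p})=\Lambda(f_{1})\cdots\Lambda(f_{p}).
\]
Taking all factors equal gives $\Lambda(f^{k})=\Lambda(f)^{k}$ for every integer $k\geq1$, and the case $k=0$ is covered by $\Lambda(1)=1$.

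With these facts the corollary follows formally. For a monomial $P=X^{k}$ one has $P(\Lambda(f))=\Lambda(f)^{k}=\Lambda(f^{k})=\Lambda(P(f))$. For an arbitrary polynomial $P=\sum_{k}a_{k}X^{k}$, the linearity of $\Lambda$ lets me sum these monomial identities:
\[
P(\Lambda(f))=\sum_{k}a_{k}\Lambda(f)^{k}=\sum_{k}a_{k}\Lambda(f^{k})=\Lambda\!\left(\sum_{k}a_{k}f^{k}\right)=\Lambda(P(f)),
\]
which is precisely \eqref{eq11}. I expect no genuine obstacle here, since all the analytic content is already in the theorem; the only points demanding care are bookkeeping ones, namely interpreting the identity as an equality of formal power series so that the degree-$n$ part matches the Cauchy product against the $p$-factor formula under only $\mathcal{C}^{n}$ regularity, and ensuring the $X^{0}$-term and the $m=0$ convention are consistent, which is exactly what $\Lambda(1)=1$ guarantees.
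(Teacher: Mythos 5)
Your proof is correct and follows essentially the same route as the paper, which states the corollary without further argument as an immediate consequence of the Lagrange product formula: your transform $\Lambda$, with linearity, $\Lambda(1)=1$ from the $m=0$ convention, and multiplicativity obtained by reading the $p$-factor formula as a Cauchy product coefficient by coefficient, is exactly the intended deduction. Your remark that the identity must be read degree by degree (the $X^{m}$-coefficient requiring only $\mathcal{C}^{m}$ data) is a sound reading of the statement and introduces no divergence from the paper's approach.
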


\subsection*{Une autre formule de Frobenius-Stickelberger}
Dans ce paragraphe la fonction complexe $\psi$ est de
classe $\mathcal{C}^{\infty}$ sur un segment $J$. Frobenius et
Stickelberger \cite{F1} ont avanc\'e une autre formule qu'il
est possible d'\'etablir \`a l'aide de la formule du produit de Lagrange,
en rempla\c{c}ant les fonctions $f$ et $g$ par la fonction $\psi^{-1}$
si la fonction $\psi$ ne s'annule pas, et plus g\'en\'eralement on peut
remplacer les fonctions $f$ et $g$ par la fonction $\psi^{-p}$
ou $\psi^{p}$ o\`u $p$ d\'esigne un entier naturel non nul. 

Supposant que la fonction $\psi$ ne s'annule pas, pour tout entier
naturel $m$ et tout entier rationnel $q$ on d\'efinit la fonction
de classe $\mathcal{C}^{\infty}$ sur le segment $J$:
\[
F_{m}\left(\psi,q\right)=D^{m-1}\left(\psi^{m}D\left(\psi^{q}\right)\right)
\]
Bien entendu $F_{0}\left(\psi,q\right)=\psi^{q}$. 

Il est imm\'ediat de constater, par r\'ecurrence sur l'ordre $q$ de d\'erivation,
la formule:
\begin{equation}\label{eq12}
q\, D^{p}F_{m}\left(\psi,q+p\right)=\left(q+p\right)F_{m+q}\left(\psi,q\right).
\end{equation}
Tenant compte de la formule du produit de Lagrange \eqref{eq:3-1} et de la formule
de d\'erivation \eqref{eq11} ci-dessus, pour tout couple $\left(n,p\right)$
d'entiers naturels v\'erifiant $1\leq p\leq n$ on obtient: 
\begin{equation}
2D^{p}F_{n-p}\left(\psi,-p\right)=\sum_{k=0}^{n}\binom{n}{k}F_{k}\left(\psi,-p\right)F_{n-k}\left(\psi,-p\right)\label{eq:5}
\end{equation}
formule plus g\'en\'erale que celle de Frobenius-Stickelberger,
\'ecrite par ces auteurs dans le cas particulier $p=1$. Pour tout
couple $\left(n,p\right)$ d'entiers non nuls on
peut disposer aussi de la formule : 
\begin{equation}
2F_{n+p}\left(\psi,p\right)=D^{p}\left(\,\sum_{k=0}^{n}\binom{n}{k}F_{k}\left(\psi,p\right)F_{n-k}\left(\psi,p\right)\right)\label{eq:6-1}.
\end{equation}
\section{Sur un th\'eor\`eme de P. J. Olver}
\subsection*{Notations }
On d\'esigne par $\mathcal{A}\left(J\right)$ la $\BC$-alg\`ebre 
des fonctions r\'eelle-analytiques sur $J$, \`a valeurs complexes.
Le\textit{ th\'eor\`eme de Pringsheim }stipule qu'une fonction complexe
$f\in \mathcal{C}^{\infty}\left(J\right)$  est r\'eelle-analytique si et seulement
s' il existe 
$R>0$ tel qu'on ait \[{\displaystyle \sup_{p\in\mathbb{N}}\dfrac{\left\Vert D^{p}g\right\Vert }{p!\, R^{p}}<+\infty}.\]
Un tel nombre sera appel\'e  niveau de la fonction
$g$.

\noindent Pour tout nombre r\'eel $R>0$, on introduit le sous-espace vectoriel
de l'alg\`ebre $\mathcal{A}\left(J\right)$: 
\[
\mathcal{A}_{R}\left(J\right)=\left\{ g\in\mathcal{A}\left(J\right)\;;\;\sup_{p\in\mathbb{N}}\dfrac{\left\Vert D^{p}g\right\Vert }{p!\, R^{p}}<+\infty\right\} 
\]
qu'on munit de la norme ${\displaystyle N_{R}\left(g\right)=\sup_{p\in\mathbb{N}}\dfrac{\left\Vert D^{p}g\right\Vert }{p!\, R^{p}}}$.
L'espace norm\'e $\mathcal{A}_{R}\left(J\right)$ est complet. Pour
tous nombres r\'eels $R$ et $S$ v\'erifiant $0<R<S$, on a l'inclusion
$\mathcal{A}_{R}\left(J\right)\subset\mathcal{A}_{S}\left(J\right)$
et l'injection canonique est continue: pour toute fonction $g$ appartenant
\`a l'espace norm\'e $\mathcal{A}_{R}\left(J\right)$ on a l'in\'egalit\'e
$N_{S}\left(g\right)\leq N_{R}\left(g\right)$. L'alg\`ebre $\mathcal{A}\left(J\right)$
est munie de la structure \textit{limite-inductive de la suite croissante
des sous-espaces de Banach $\left(\mathcal{A}_{k}\left(J\right)\right)_{k\geq1}$}. 

\'Etant donn\'es une suite $\left(a_{n}\right)_{n\in\mathbb{N}}$ d'\'el\'ements
de l'alg\`ebre $\mathcal{A}\left(J\right)$ et $\zeta\in\mathbb{C}^{*}$,
la s\'erie ${\displaystyle \sum_{n=0}^{\infty}\zeta^{n}a_{n}}$ est
convergente dans l'alg\`ebre $\mathcal{A}\left(J\right)$ si, par d\'efinition,
elle l'est dans un espace de Banach $\mathcal{A}_{R}\left(J\right)$,
ce qui suppose en particulier l'appartenance des sommes partielles
\`a l'espace de Banach $\mathcal{A}_{R}\left(J\right)$ ; ainsi toutes les
fonctions coefficients $a_{n}$ ont un m\^eme niveau $R$, car le nombre
complexe $\zeta$ est distinct de $0$. La suite de terme g\'en\'eral
$\zeta^{n}a_{n}$ est donc born\'ee dans l'espace de Banach $\mathcal{A}_{R}\left(J\right)$,
de sorte que l'ensemble 
\[
\bigl\{ r>0\;;\;\mathfrak{A}\left(r\right)=\sup_{n}N_{R}\left(a_{n}\right)r^{n}<+\infty\bigr\}
\]
n'est pas vide puisqu'il contient $\left|\zeta\right|$, et est un
intervalle dont la borne sup\'erieure $\rho$ est strictement positive,
\'eventuellement infinie. Ce nombre $\rho$ est le \textit{rayon de
convergence de la s\'erie enti\`ere} ${\displaystyle \sum_{n=0}^{\infty}z^{n}a_{n}}$: pour tout nombre complexe $z$ v\'erifiant $\left|z\right|<\rho$,
la s\'erie ${\displaystyle \sum_{n=0}^{\infty}z^{n}a_{n}}$ est absolument
convergente dans l'espace de Banach $\mathcal{A}_{R}\left(J\right)$,
de plus on a la formule de Hadamard: 
\[
\rho=\Bigl(\limsup_{n}\left(\sqrt[n]{N_{R}\left(a_{n}\right)}\right)\Bigr)^{-1}>0.
\]
Ainsi on dispose de l'application somme de s\'erie enti\`ere \`a
valeurs dans $\mathcal{A}\left(J\right)$: 
\begin{equation*}
 \overset{\circ}{D}\left(0,\rho\right)\overset{f}{\longrightarrow}\mathcal{A}\left(J\right),\qquad f\left(z\right)={\displaystyle \sum_{n=0}^{\infty}z^{n}a_{n}}
\end{equation*}
qui satisfait, pour tout  $z\in \BC,\;\left|z\right|<\rho$, \`a l'in\'egalit\'e 
\begin{equation*} 
N_{R}\left(f\left(z\right)\right)\leq\sum_{n=0}^{\infty}N_{R}\left(a_{n}\right)\left|z\right|^{n}
\end{equation*}
et la s\'erie de fonctions ${\displaystyle \sum_{n=0}^{\infty}z^{n}a_{n}}$
est normalement convergente sur le segment $J$ et ainsi pour tout point
$t$ du segment $J$ on a $f\left(z\right)\left(t\right)={\displaystyle \sum_{n=0}^{\infty}a_{n}\left(t\right)z^{n}}$.
La condition d'appartenance des coefficients $a_{n}$ \`a l'un des sous-espaces
de Banach $\mathcal{A}_{R}\left(J\right)$ et l'in\'egalit\'e stricte
$\rho>0$ sont suffisantes pour assurer que pour tout $z$ de module
strictement inf\'erieur \`a $\rho$, la somme de la s\'erie de fonctions
${\displaystyle \sum_{n=0}^{\infty}a_{n}\left(t\right)z^{n}}$ de
la variable r\'eelle $t$, est analytique sur le segment $J$. On peut observer 
que la convergence normale sur le segment $J$ de cette s\'erie et de
toutes ses s\'eries d\'eriv\'ees, ne conduit pas \`a l'analyticit\'e de la somme,
comme le montre l'exemple ${\displaystyle \sum_{n=0}^{\infty}\dfrac{1}{1+nt}z^{n}}$
sur le segment $\bigl[0,1\bigr]$. 

On va \'enoncer \`a pr\'esent quelques lemmes qui nous seront utiles par la suite
\begin{lem}\label{t3}
Soit une suite $\left(f_{n}\right)_{n\in\mathbb{N}}$ d'\'el\'ements de
l'alg\`ebre $\mathcal{A}\left(J\right)$. On suppose qu'il existe trois
nombres strictement positifs $R$, $A$ et $S$, un nombre entier
$q\geq1$, tels pour tout entier naturel $n\geq1$ on ait $N_{R}\left(f_{n}\right)\leq A\, n!\, S^{n}$. 

Alors la s\'erie enti\`ere ${\displaystyle \sum_{n=0}^{\infty}\dfrac{1}{n!}\, z^{n}f_{n}}$
d\'efinit l'application $f$ somme de s\'erie enti\`ere \`a valeurs dans\textbf{
}$\mathcal{A}\left(J\right)$, pr\'ecis\'ement dans $\mathcal{A}_{R}\left(J\right)$
\[
\overset{\circ}{D}\left(0,S^{-1}\right)\overset{f}{\longrightarrow}\mathcal{A}\left(J\right),\qquad f\left(z\right)=\sum_{n=0}^{\infty}\dfrac{1}{n!}\, z^{n}f_{n}.
\]
\end{lem}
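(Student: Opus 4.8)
The plan is to read $\sum_{n=0}^{\infty}\frac{1}{n!}z^{n}f_{n}$ as a power series $\sum_{n=0}^{\infty}z^{n}a_{n}$ with coefficients $a_{n}=\frac{1}{n!}f_{n}$ in the Banach space $\mathcal{A}_{R}(J)$, and to exhibit a geometric majorant for the norms $N_{R}(a_{n})$. The conclusion then follows from the completeness of $\mathcal{A}_{R}(J)$ together with the Hadamard formula recalled just before the statement.

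First I would estimate the coefficient norms. By homogeneity of $N_{R}$ and the hypothesis, for every $n\geq1$,
\[
N_{R}(a_{n})=\frac{1}{n!}\,N_{R}(f_{n})\leq\frac{1}{n!}\,A\,n!\,S^{n}=A\,S^{n}.
\]
Hence $\sqrt[n]{N_{R}(a_{n})}\leq S\sqrt[n]{A}\to S$, so that $\limsup_{n}\sqrt[n]{N_{R}(a_{n})}\leq S$ and the Hadamard formula gives, for the radius of convergence $\rho$ of $\sum z^{n}a_{n}$, the lower bound $\rho\geq S^{-1}$. Equivalently, and more concretely, for any fixed $z$ with $\left|z\right|<S^{-1}$ one has the geometric bound
\[
\sum_{n=1}^{\infty}N_{R}(a_{n})\left|z\right|^{n}\leq A\sum_{n=1}^{\infty}(S\left|z\right|)^{n}=\frac{A\,S\left|z\right|}{1-S\left|z\right|}<+\infty,
\]
so that the tail $\sum_{n\geq1}z^{n}a_{n}$ converges absolutely in the normed space $\mathcal{A}_{R}(J)$.

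Next I would invoke completeness: since $\mathcal{A}_{R}(J)$ is a Banach space, absolute convergence forces convergence, and the tail sums to an element of $\mathcal{A}_{R}(J)$. The constant term $a_{0}=f_{0}$ is a fixed function, the coefficient of $z^{0}$, which plays no role in the convergence; reading the hypothesis at $n=0$ (with $0!=1$) as $N_{R}(f_{0})\leq A$ places it in $\mathcal{A}_{R}(J)$ as well. Therefore $f(z)=\sum_{n=0}^{\infty}\frac{1}{n!}z^{n}f_{n}$ is a well-defined element of $\mathcal{A}_{R}(J)\subset\mathcal{A}(J)$ for every $z$ in the open disk $\overset{\circ}{D}(0,S^{-1})$, which is precisely the asserted power-series map.

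I do not anticipate a genuine difficulty here: this is the standard argument for normal convergence of a power series valued in a Banach space. The only points demanding care are the bookkeeping of the factor $1/n!$, which exactly cancels the $n!$ in the hypothesis and manufactures the geometric majorant, and the fact that the estimate is stated only for $n\geq1$, so that the term $a_{0}=f_{0}$ must be accounted for separately; the integer $q$ appearing in the statement plays no role in the estimate and may be ignored.
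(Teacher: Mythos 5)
Your proof is correct and follows essentially the same route as the paper's: the coefficient bound $N_{R}\bigl(\tfrac{1}{n!}f_{n}\bigr)\leq A\,S^{n}$ fed into the Banach-space power-series framework set up just before the lemma (radius of convergence $\rho\geq S^{-1}$ via the Hadamard criterion, then absolute convergence in the complete space $\mathcal{A}_{R}\left(J\right)$). Your two side remarks --- that the hypothesis must be read at $n=0$ as well so that $f_{0}\in\mathcal{A}_{R}\left(J\right)$, and that the integer $q$ is vestigial --- concern points the paper passes over silently (its closing estimate $N_{R}\left(f\left(z\right)\right)\leq A/\left(1-S\left|z\right|\right)$ tacitly assumes $N_{R}\left(f_{0}\right)\leq A$), so flagging them is a minor improvement rather than a divergence.
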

\begin{proof}
Pour tout entier naturel $n\in \BN^*$  on a $\displaystyle N_{R}\left(\dfrac{1}{n!}\, f_{n}\right)\leq S^{n}A$.
De ce fait l'ensemble $\displaystyle \left\{ r>0\;;\;\sup_{n}N_{R}\left(\dfrac{1}{n!}f_{n}\right)r^{n}<+\infty\right\} $
contient l'intervalle $\bigl]0,S^{-1}\bigr[$ et est contenu dans
l'intervalle $\bigl]0,S^{-1}\bigr]$. Observons qu'on a l'in\'egalit\'e
$\displaystyle N_{R}\left(f\left(z\right)\right)\leq\dfrac{A}{1-S\left|z\right|}$. \end{proof}
\begin{lem}\label{t4}
Pour tout entier naturel $p\geq1$, pour tout $p$-uple $\left(g_{1},\ldots,g_{p}\right)$
de fonctions r\'eelle-analytiques de niveau $R$ sur le segment $J$,
on a l'in\'egalit\'e: 
\[
N_{2R}\left(g_{1}\cdots g_{p}\right)\leq2^{p-1}N_{R}\left(g_{1}\right)\cdots N_{R}\left(g_{p}\right).
\]
\end{lem}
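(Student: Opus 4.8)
The plan is to estimate $\Vert D^{m}\left(g_{1}\cdots g_{p}\right)\Vert$ directly for each order $m$ by means of the generalized Leibniz rule, rather than by induction on $p$. An induction would force one to multiply a function of level $2R$ by a function of level $R$, and the resulting mismatch of levels makes the bookkeeping awkward; a single application of the multivariate Leibniz formula avoids this entirely. Writing
\[
D^{m}\left(g_{1}\cdots g_{p}\right)=\sum_{\alpha\in E\left(m,p\right)}\dfrac{m!}{\alpha_{1}!\cdots\alpha_{p}!}\,D^{\alpha_{1}}g_{1}\cdots D^{\alpha_{p}}g_{p},
\]
and taking sup-norms over $\left[a,b\right]$, the triangle inequality together with $\Vert D^{\alpha_{1}}g_{1}\cdots D^{\alpha_{p}}g_{p}\Vert\le\prod_{j}\Vert D^{\alpha_{j}}g_{j}\Vert$ reduces everything to estimating each factor $\Vert D^{\alpha_{j}}g_{j}\Vert$ separately.

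First I would use the hypothesis that each $g_{j}$ has level $R$, that is, $\Vert D^{\alpha_{j}}g_{j}\Vert\le N_{R}\left(g_{j}\right)\,\alpha_{j}!\,R^{\alpha_{j}}$. Substituting this into the bound above, the product $\alpha_{1}!\cdots\alpha_{p}!$ coming from the derivative estimates cancels exactly against the denominator of the multinomial coefficient, while $R^{\alpha_{1}+\cdots+\alpha_{p}}=R^{m}$ since $\alpha\in E\left(m,p\right)$. What remains under the sum is the constant $m!\,R^{m}\prod_{j}N_{R}\left(g_{j}\right)$, independent of $\alpha$. Hence the whole sum equals $\left|E\left(m,p\right)\right|\cdot m!\,R^{m}\prod_{j}N_{R}\left(g_{j}\right)$, and by the cardinality formula recalled in the introduction, $\left|E\left(m,p\right)\right|=\dbinom{m+p-1}{p-1}$.

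The last step is to divide by $m!\,\left(2R\right)^{m}$ and pass to the supremum over $m$. The factor $R^{m}/\left(2R\right)^{m}=2^{-m}$ then appears, giving
\[
\dfrac{\Vert D^{m}\left(g_{1}\cdots g_{p}\right)\Vert}{m!\,\left(2R\right)^{m}}\le\dfrac{\dbinom{m+p-1}{p-1}}{2^{m}}\prod_{j=1}^{p}N_{R}\left(g_{j}\right).
\]
Here the one genuinely quantitative point, and the place where the doubling of the level from $R$ to $2R$ is essential, is the uniform bound $\dbinom{m+p-1}{p-1}\le2^{m+p-1}$, which is the instance $\dbinom{N}{k}\le2^{N}$ noted in the introduction, with $N=m+p-1$ and $k=p-1$. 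This yields $\dbinom{m+p-1}{p-1}/2^{m}\le2^{p-1}$ for every $m$, so the supremum over $m$ of the left-hand side is at most $2^{p-1}\prod_{j}N_{R}\left(g_{j}\right)$, which is precisely $N_{2R}\left(g_{1}\cdots g_{p}\right)\le2^{p-1}\prod_{j}N_{R}\left(g_{j}\right)$. The main obstacle is thus only combinatorial and rather mild: recognizing that the multinomial weights cancel and that the polynomial growth of $\left|E\left(m,p\right)\right|$ in $m$ is swallowed by the geometric factor $2^{m}$ produced by the transition to level $2R$; the rest is a routine term-by-term estimate.
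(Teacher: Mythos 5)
Your proposal is correct and coincides with the paper's own proof: the same single application of the multivariate Leibniz formula, the same cancellation of the multinomial weights against the factorials from $\left\Vert D^{\alpha_{j}}g_{j}\right\Vert \leq N_{R}\left(g_{j}\right)\alpha_{j}!\,R^{\alpha_{j}}$, the same count $\left|E\left(m,p\right)\right|=\dbinom{m+p-1}{p-1}$, and the same bound $\dbinom{m+p-1}{p-1}\leq2^{m+p-1}$ absorbed by the passage from level $R$ to level $2R$. Nothing to add.
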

\begin{proof}

En vertu de la formule de Leibniz, pour tout entier naturel $n$,
on a:
\[
D^{n}\left(g_{1}\cdots g_{p}\right)=\sum_{\alpha\in E\left(n,p\right)}\dfrac{n!}{\alpha_{1}!\cdots\alpha_{p}!}D^{\alpha_{1}}g_{1}\cdots D^{\alpha_{p}}g_{p}.
\]
Par suite
\begin{align*}
&\left\Vert D^{n}\left(g_{1}\cdots g_{p}\right)\right\Vert \leq n!\sum_{\alpha\in E\left(n,p\right)}\dfrac{1}{\alpha_{1}!}\left\Vert D^{\alpha_{1}}g_{1}\right\Vert \cdots\dfrac{1}{\alpha_{p}!}\left\Vert D^{\alpha_{p}}g_{p}\right\Vert 
\\
&\leq n!\, N_{R}\left(g_{1}\right)\cdots N_{R}\left(g_{p}\right)\sum_{\alpha\in E\left(n,p\right)}R^{\alpha_{1}+\cdots+\alpha_{p}}= \dbinom{n+p-1}{p-1}\, n!\, R^{n}N_{R}\left(g_{1}\right)\cdots N_{R}\left(g_{p}\right)
\\
&\leq2^{n+p-1}n!\, R^{n}N_{R}\left(g_{1}\right)\cdots N_{R}\left(g_{p}\right)= n!\,\left(2R\right)^{n}2^{p-1}N_{R}\left(g_{1}\right)\cdots N_{R}\left(g_{p}\right).
\end{align*}
\end{proof}
\begin{lem}\label{t5}
Pour toute fonction r\'eelle-analytique $g$ de niveau $S$ sur le segment
$J$, et pour tout entier naturel $q$ on a l'in\'egalit\'e: 
\[
N_{2S}\left(D^{q}g\right)\leq N_{S}\left(g\right)q!\left(2S\right)^{q}.
\]
\end{lem}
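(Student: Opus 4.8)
The plan is to unwind the definition of $N_{2S}$ directly against the definition of the level of $g$. By definition,
\[
N_{2S}\left(D^{q}g\right)=\sup_{p\in\BN}\dfrac{\left\Vert D^{p}\left(D^{q}g\right)\right\Vert }{p!\left(2S\right)^{p}}=\sup_{p\in\BN}\dfrac{\left\Vert D^{p+q}g\right\Vert }{p!\left(2S\right)^{p}},
\]
so it suffices to bound each term of this supremum uniformly in $p$.

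First I would invoke the hypothesis that $g$ has level $S$, which reads $\left\Vert D^{p+q}g\right\Vert \leq N_{S}\left(g\right)\left(p+q\right)!\,S^{p+q}$ for every $p$. Substituting this and separating the powers of $S$ from the powers of $2$, the general term becomes
\[
\dfrac{\left\Vert D^{p+q}g\right\Vert }{p!\left(2S\right)^{p}}\leq N_{S}\left(g\right)S^{q}\dfrac{\left(p+q\right)!}{p!}\,2^{-p}.
\]
The target quantity being $N_{S}\left(g\right)q!\left(2S\right)^{q}=N_{S}\left(g\right)q!\,2^{q}S^{q}$, the desired inequality follows once I establish $\dfrac{\left(p+q\right)!}{p!}\,2^{-p}\leq q!\,2^{q}$ for every $p$, which after multiplying through by $2^{p}/q!$ is exactly the estimate $\dbinom{p+q}{q}\leq 2^{p+q}$.

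This last binomial estimate is the only point to check, and it is immediate: $\dbinom{p+q}{q}$ is a single term in the expansion of $\left(1+1\right)^{p+q}=2^{p+q}$, hence is at most the whole sum — this is precisely the binomial bound $\dbinom{n}{p}\leq 2^{n}$ already recorded in the introductory notation of the paper, applied with $n=p+q$. Taking the supremum over $p$ of the right-hand side, which no longer depends on $p$, then yields $N_{2S}\left(D^{q}g\right)\leq N_{S}\left(g\right)q!\left(2S\right)^{q}$, as claimed. I do not anticipate any genuine obstacle here: the entire content of the lemma is the elementary reduction to $\dbinom{p+q}{q}\leq 2^{p+q}$.
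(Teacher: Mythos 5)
Your proof is correct and is essentially the paper's own argument: the paper proves the lemma in one line from the inequality $\left(p+q\right)!\leq p!\,q!\,2^{p+q}$, which is exactly your reduction to $\dbinom{p+q}{q}\leq 2^{p+q}$, applied to the bound $\bigl\Vert D^{p+q}g\bigr\Vert\leq N_{S}\left(g\right)\left(p+q\right)!\,S^{p+q}$. Your write-up merely makes the supremum over $p$ explicit; there is no substantive difference.
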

\begin{proof}
Comme on a l'in\'egalit\'e $\left(p+q\right)!\leq p!\, q!\,2^{p+q}$,
on a successivement: 
\[
\bigl\Vert D^{p}\left(D^{q}g\right)\bigr\Vert=\bigl\Vert D^{p+q}g\bigr\Vert\leq N_{S}\left(g\right)\left(p+q\right)!\, S^{p+q}\leq\left(N_{S}\left(g\right)q!\left(2S\right)^{q}\right)p!\left(2S\right)^{p}.
\]
\end{proof}
\begin{cor}
Soient un couple $\left(u,g\right)$ de fonctions r\'eelle-analytiques
sur $J$, $u$ \'etant de niveau $R$ et $g$ de niveau $S$. On d\'efinit
le nombre $T=2\,\max\left(R,2\, S\right)$. Alors la suite $\left(D^{n-1}\left(u^{n}Dg\right)\right)_{n\geq1}$
d'\'el\'ements de l'alg\`ebre $\mathcal{A}\left(J\right)$ satisfait l'hypoth\`ese
du lemme \eqref{t3}, \`a savoir pour tout entier naturel $n\geq1$: 
\[
N_{2T}\left(D^{n-1}\left(u^{n}Dg\right)\right)\leq\dfrac{S}{T}N_{S}\left(g\right)n!\left(4N_{R}\left(u\right)T\right)^{n}\leq\dfrac{1}{4}N_{S}\left(g\right)n!\left(4N_{R}\left(u\right)T\right)^{n}.
\]
\end{cor}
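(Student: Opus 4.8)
The plan is to bound $N_{2T}\bigl(D^{n-1}(u^{n}Dg)\bigr)$ by chaining together the two estimates already established, the multiplicative inequality of the lemme \eqref{t4} and the differentiation inequality of the lemme \eqref{t5}, while tracking carefully how each of them doubles the niveau. The point is to regard $u^{n}Dg$ as a product of $n+1$ real-analytic factors, namely $n$ copies of $u$ (each of niveau $R$) together with the single factor $Dg$; the lemme \eqref{t4} then controls the product, after which one application of the lemme \eqref{t5} absorbs the outer $D^{n-1}$. The only real difficulty is bookkeeping: the lemme \eqref{t4} demands that all factors share a common niveau, so the niveaux must be homogenized before the factors are combined. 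Throughout, $n\geq1$, so that $D^{n-1}$ is genuine and the final application of the lemme \eqref{t5} uses the admissible order $q=n-1\geq0$.

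First I would handle the factor $Dg$. Since $g$ is of niveau $S$, the lemme \eqref{t5} with $q=1$ gives $N_{2S}(Dg)\leq 2S\,N_{S}(g)$, so $Dg$ is of niveau $2S$. Setting $M=\max(R,2S)$, the monotonicity $N_{M}\leq N_{R'}$ valid for $M\geq R'$ (recorded in the discussion of the spaces $\mathcal{A}_{R}(J)$) shows that $u$ is of niveau $M$ with $N_{M}(u)\leq N_{R}(u)$, and that $Dg$ is of niveau $M$ with $N_{M}(Dg)\leq 2S\,N_{S}(g)$. Both factors now live at the single niveau $M$, which is what the next step requires.

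Next I would apply the lemme \eqref{t4} to the $n+1$ factors $u,\ldots,u,Dg$, all of niveau $M$, which yields
\[
N_{2M}(u^{n}Dg)\leq 2^{n}\,N_{M}(u)^{n}N_{M}(Dg)\leq 2^{n+1}S\,N_{R}(u)^{n}N_{S}(g).
\]
Because $2M=2\max(R,2S)=T$, this bounds $N_{T}(u^{n}Dg)$. Applying the lemme \eqref{t5} once more, now with $q=n-1$ to the function $u^{n}Dg$ of niveau $T$, gives
\[
N_{2T}\bigl(D^{n-1}(u^{n}Dg)\bigr)\leq N_{T}(u^{n}Dg)\,(n-1)!\,(2T)^{n-1}\leq 2^{n+1}S\,N_{R}(u)^{n}N_{S}(g)\,(n-1)!\,(2T)^{n-1}.
\]

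Finally I would reconcile this with the asserted bound. Using the identity $2^{n+1}(2T)^{n-1}=4^{n}T^{n-1}$ together with $(n-1)!=n!/n\leq n!$, the right-hand side is at most $\frac{S}{T}N_{S}(g)\,n!\,(4N_{R}(u)T)^{n}$, which is exactly the first claimed inequality (the suppressed factor $1/n$ showing the estimate is in fact slightly sharper). The second inequality is immediate, since $T\geq 2\cdot 2S=4S$ forces $S/T\leq 1/4$. I expect the main obstacle to be nothing more than this niveau arithmetic: each of the two lemmas raises the niveau by a factor $2$, which is precisely why the conclusion is stated at niveau $2T$ and why $T$ itself carries the outer factor $2$ in its definition; keeping these doublings straight is the whole content of the proof.
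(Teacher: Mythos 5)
Your proof is correct and takes essentially the same route as the paper: first the lemme \eqref{t5} to get $N_{2S}\left(Dg\right)\leq 2S\,N_{S}\left(g\right)$, then the lemme \eqref{t4} applied to the $n+1$ factors at the common niveau $\max\left(R,2S\right)=T/2$ to bound $N_{T}\left(u^{n}Dg\right)\leq 2^{n+1}S\,N_{R}\left(u\right)^{n}N_{S}\left(g\right)$, and finally the lemme \eqref{t5} again with $q=n-1$, followed by the same arithmetic $2^{n+1}\left(2T\right)^{n-1}=4^{n}T^{n-1}$ and $\left(n-1\right)!\leq n!$. Your explicit homogenization of the niveaux before invoking the lemme \eqref{t4} (which requires a single common niveau) merely spells out a step the paper leaves implicit, so the two arguments coincide.
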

\begin{proof}
D'apr\`es le lemme \eqref{t5} la fonction r\'eelle-analytique $Dg$ appartient
\`a $\mathcal{A}_{2S}\left(J\right)$, pr\'ecis\'ement $N_{2S}\left(Dg\right)\leq2SN_{S}\left(g\right)$.
Ensuite, d'apr\`es le lemme \eqref{t4}, on a: 
\[
N_{T}\left(u^{n}Dg\right)\leq2^{n}N_{R}\left(u\right)^{n}N_{2S}\left(Dg\right)\leq2^{n+1}S\, N_{R}\left(u\right)^{n}N_{S}\left(g\right).
\]
Puis \`a nouveau en vertu du lemme \eqref{t5}, pour tout entier naturel $n\geq1$
on conclut: 
\[
N_{2T}\left(D^{n-1}\left(u^{n}Dg\right)\right)\leq N_{T}\left(u^{n}Dg\right)\left(n-1\right)!\left(2\, T\right)^{n-1}\leq\dfrac{S}{T}N_{S}\left(g\right)n!\left(4N_{R}\left(u\right)T\right)^{n}.
\]
\end{proof}
\begin{rem}\label{t6}
On dispose donc de l'application somme de s\'erie enti\`ere \`a valeurs
dans $\mathcal{A}\left(J\right)$, pr\'ecis\'ement dans $\mathcal{A}_{2T}\left(J\right)$:
\[
\overset{\circ}{D}\left(0,\left(4N_{R}\left(u\right)T\right)^{-1}\right)\longrightarrow\mathcal{A}\left(J\right),\qquad z\longmapsto\sum_{n=0}^{\infty}\dfrac{z^{n}}{n!}D^{n-1}\left(u^{n}Dg\right).
\]
\end{rem}
Rappelons qu'une fonction somme de s\'erie enti\`ere \`a coefficients complexes
${\displaystyle f\left(z\right)=\sum_{n=0}^{\infty}c_{n}z^{n}}$,
de rayon de convergence strictement positif $\rho$, est telle que
pour tout nombre complexe $z$ v\'erifiant $\left|z\right|<\rho$, pour
tout nombre r\'eel $r$ appartenant \`a l'intervalle $\bigl[\left|z\right|,\rho\bigr[$
on a l'in\'egalit\'e: 
\[
\sum_{k=0}^{\infty}\dfrac{1}{k!}\left|f^{\left(k\right)}\left(z\right)\right|\left(r-\left|z\right|\right)^{k}\leq\sum_{n=0}^{\infty}\left|c_{n}\right|r^{n}=f^{*}\left(r\right).
\]

\begin{prop}
Soient une fonction $g$ r\'eelle-analytique de niveau $S$ sur le segment
$J$, un point $t$ de $J$ et une s\'erie enti\`ere \`a coefficients complexes
${\displaystyle \psi\left(w\right)=\sum_{n=0}^{\infty}c_{n}w^{n}}$,
de rayon de convergence $\rho >0$. La fonction somme
de s\'erie enti\`ere
\[\zeta\longmapsto\varphi_{\, t}\left(\zeta\right)=\psi\left(\zeta-g\left(t\right)\right)\]
est d\'efinie dans le disque ouvert $\overset{\circ}{D}\left(g\left(t\right),\rho\right)$. 

Pour tout segment $J_{t}$ contenant le point $t$, contenu dans le
segment $J$ et tel que l'image $g\left(J_{t}\right)$ soit contenue
dans le disque ouvert $\overset{\circ}{D}\left(g\left(t\right),\rho\right)$,
la fonction compos\'ee $\varphi_{\, t}\circ g$ est r\'eelle-analytique
de niveau $S_{J_{t}}\left(r\right)$ sur le segment $J_{t}$ o\`u:
\[
S_{J_{t}}\left(r\right)=\left(1+\dfrac{N_{S}\left(g\right)}{r-\left\Vert g-g\left(t\right)\right\Vert _{J_{t}}}\right)S\] 
et de plus 
\[\quad\left\Vert g-g\left(t\right)\right\Vert _{J_{t}}<r<\rho
\]
\end{prop}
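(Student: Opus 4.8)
The plan is to bound the successive derivatives of $h:=\varphi_{\,t}\circ g$ on $J_t$ directly and to read off the level from those bounds. First I would translate everything to $\tilde g:=g-g\left(t\right)$, so that $h=\psi\circ\tilde g$ with $\tilde g\left(t\right)=0$; writing $a:=\left\|g-g\left(t\right)\right\|_{J_t}=\left\|\tilde g\right\|_{J_t}$, the hypotheses give $a<r<\rho$, and since $D^{j}\tilde g=D^{j}g$ for $j\ge1$ the level assumption on $g$ reads $\left\|D^{j}\tilde g\right\|_{J_t}\le N_{S}\left(g\right)\,j!\,S^{j}$. As $g$ is analytic and $\varphi_{\,t}$ is a convergent power series on $\overset{\circ}{D}\left(g\left(t\right),\rho\right)\supset g\left(J_t\right)$, the composite $h$ is of class $\mathcal{C}^{\infty}$ on $J_t$, so it suffices to estimate $\left\|D^{p}h\right\|_{J_t}$.

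Second, I would turn the Cauchy-type inequality recalled just before the statement into a uniform bound for the derivatives of $\psi$ along $g$. Applying it to $f=\psi$ at the point $z=\tilde g\left(s\right)$ is legitimate because $\left|z\right|\le a<r<\rho$; keeping only the $k$-th term of the majorizing series and using $r-\left|\tilde g\left(s\right)\right|\ge r-a$ yields $\tfrac{1}{k!}\bigl|\psi^{\left(k\right)}\left(\tilde g\left(s\right)\right)\bigr|\left(r-a\right)^{k}\le\psi^{*}\left(r\right)$. Taking the supremum over $s\in J_t$ gives, for every $k$,
\[
\bigl\|\psi^{\left(k\right)}\circ\tilde g\bigr\|_{J_t}\le\psi^{*}\left(r\right)\dfrac{k!}{\left(r-a\right)^{k}}.
\]

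Third, I would expand $D^{p}h$ by the Fa\`a di Bruno formula, $D^{p}h=\sum_{k=1}^{p}\bigl(\psi^{\left(k\right)}\circ\tilde g\bigr)\,B_{p,k}\left(D\tilde g,\ldots,D^{p-k+1}\tilde g\right)$, where $B_{p,k}$ denotes the partial Bell polynomial. Since $B_{p,k}$ has nonnegative coefficients and is nondecreasing in the moduli of its arguments, the bound $\left\|D^{j}\tilde g\right\|_{J_t}\le N_{S}\left(g\right)j!\,S^{j}$, the weighted homogeneity $B_{p,k}\left(\lambda\mu\,x_{1},\lambda\mu^{2}x_{2},\ldots\right)=\lambda^{k}\mu^{p}B_{p,k}\left(x_{1},x_{2},\ldots\right)$ applied with $\lambda=N_{S}\left(g\right),\mu=S,x_{i}=i!$, and the Lah evaluation $B_{p,k}\left(1!,2!,\ldots\right)=\binom{p-1}{k-1}\tfrac{p!}{k!}$ together give $\bigl\|B_{p,k}\left(D\tilde g,\ldots\right)\bigr\|_{J_t}\le N_{S}\left(g\right)^{k}S^{p}\binom{p-1}{k-1}\tfrac{p!}{k!}$. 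Combining this with the second step, writing $\beta:=N_{S}\left(g\right)/\left(r-a\right)$ and using $\sum_{k=1}^{p}\binom{p-1}{k-1}\beta^{k}=\beta\left(1+\beta\right)^{p-1}$, I obtain
\[
\left\|D^{p}h\right\|_{J_t}\le\psi^{*}\left(r\right)p!\,S^{p}\,\beta\left(1+\beta\right)^{p-1}\le\psi^{*}\left(r\right)p!\,\bigl(\left(1+\beta\right)S\bigr)^{p}.
\]
Since $\left(1+\beta\right)S=S_{J_t}\left(r\right)$, and the case $p=0$ is the trivial bound $\left\|h\right\|_{J_t}\le\psi^{*}\left(a\right)\le\psi^{*}\left(r\right)$, this says $N_{S_{J_t}\left(r\right)}\left(h\right)\le\psi^{*}\left(r\right)<+\infty$ (finite because $r<\rho$), which is exactly analyticity of $\varphi_{\,t}\circ g$ on $J_t$ of level $S_{J_t}\left(r\right)$.

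The main obstacle is the combinatorial third step: controlling the partial Bell polynomials uniformly in the base point and recognizing the resulting sum. An equivalent but cleaner route dispenses with the explicit Lah numbers through the method of majorant series. At any $s_{0}\in J_t$ the Taylor coefficients of $\tilde g$ are dominated by those of $V\left(x\right)=N_{S}\left(g\right)Sx/\left(1-Sx\right)$, while by the second step those of $\psi$ about $\tilde g\left(s_{0}\right)$ are dominated by those of $\Psi\left(v\right)=\psi^{*}\left(r\right)\left(r-a\right)/\bigl(\left(r-a\right)-v\bigr)$; the nonnegativity of the Fa\`a di Bruno coefficients then gives a termwise domination of $x\mapsto h\left(s_{0}+x\right)$ by
\[
\Psi\bigl(V\left(x\right)\bigr)=\psi^{*}\left(r\right)\dfrac{1-Sx}{1-\left(1+\beta\right)Sx},
\]
whose coefficient of $x^{p}$ equals $\psi^{*}\left(r\right)\tfrac{\beta}{1+\beta}\bigl(\left(1+\beta\right)S\bigr)^{p}$ for $p\ge1$, recovering the previous estimate in a single generating-function computation.
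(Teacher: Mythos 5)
Your proposal is correct and follows essentially the same route as the paper: Faà di Bruno expressed with the Bell polynomials $B_{p,k}$, the estimate $\bigl\Vert B_{p,k}\left(Dg,\ldots\right)\bigr\Vert\le\binom{p-1}{k-1}\frac{p!}{k!}N_{S}\left(g\right)^{k}S^{p}$ (your Lah-number evaluation is exactly the paper's count $\operatorname{card}F\left(p,k\right)=\binom{p-1}{k-1}$), and the Cauchy-type inequality recalled just before the statement, summed via $\sum_{k}\binom{p-1}{k-1}\beta^{k}=\beta\left(1+\beta\right)^{p-1}$. The only differences are cosmetic --- you substitute the uniform gap $r-\left\Vert g-g\left(t\right)\right\Vert_{J_{t}}$ where the paper keeps the pointwise quantity $r-\left|g\left(s\right)-g\left(t\right)\right|$ until the final step, and your closing majorant-series variant is an equivalent repackaging of the same estimate.
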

\begin{proof}
En vertu de la formule de Fa\`a di Bruno exprim\'ee avec les polyn\^omes
de Bell \cite{F2} (ainsi que les r\'ef\'erences qui s'y trouvent),
pour tout entier naturel $p\geq1$, on peut \'ecrire: 
\[
D^{p}\left(\varphi_{\, t}\circ g\right)=\sum_{i=1}^{p}\left(\varphi_{\, t}^{\left(i\right)}\circ g\right)B_{p,i}\left(Dg,\ldots,D^{p-i+1}g\right).
\]
Compte-tenu de l'expression des polyn\^omes de Bell: 
\[
B_{p,i}=\dfrac{1}{i!}\sum_{\alpha\in F\left(p,i\right)}\dfrac{p!}{\alpha_{1}!\cdots\alpha_{i}!}X_{\alpha_{1}}\cdots X_{\alpha_{i}}
\]
o\`u $F\left(p,i\right)$ est l'ensemble 
\[
\bigl\{\alpha=\left(\alpha_{1},\ldots,\alpha_{i}\right)\in\left(\mathbb{N}^{*}\right)^{i}\;;\;\alpha_{1}+\cdots+\alpha_{i}=p\bigr\}
\]
dont le cardinal est ${\displaystyle \binom{p-1}{i-1}}$. On obtient: 
\[
\left\Vert B_{p,i}\left(Dg,\ldots,D^{p-i+1}g\right)\right\Vert _{J}\leq\binom{p-1}{i-1}\dfrac{1}{i!}\, N_{S}\left(g\right)^{i}\, p!\, S^{p}.
\]
Pour tout $s$ appartenant \`a $J_{t}$ on tire l'in\'egalit\'e: 
\[
\bigl|D^{p}\left(\varphi_{\, t}\circ g\right)\left(s\right)\bigr|
\leq p!\, S^{p}\sum_{j=0}^{p-1}\binom{p-1}{j}\dfrac{1}{\left(j+1\right)!}\bigl|\varphi_{\, t}^{\left(j+1\right)}\left(g\left(s\right)\right)\bigr|N_{S}\left(g\right)^{j+1}.
\]
Or, pour tout $s$ appartenant \`a $J_{t}$ et pour tout nombre $r$
v\'erifiant $\left|g\left(s\right)-g\left(t\right)\right|<r<\rho$,
on dispose des in\'egalit\'es: 
\[
\dfrac{1}{\left(j+1\right)!}\bigl|\varphi_{\, t}^{\left(j+1\right)}\left(g\left(s\right)\right)\bigr|=\dfrac{1}{\left(j+1\right)!}\bigl|\psi^{\left(j+1\right)}\left(g\left(s\right)-g\left(t\right)\right)\bigr|\leq\dfrac{\psi^{*}\left(r\right)}{\left(r-\left|g\left(s\right)-g\left(t\right)\right|\right)^{j+1}}
\]
desquelles on d\'eduit: 
\[
\bigl|D^{p}\left(\varphi_{\, t}\circ g\right)\left(s\right)\bigr|\leq\psi^{*}\left(r\right)p!\, S^{p}\sum_{j=0}^{p-1}\binom{p-1}{j}\dfrac{N_{S}\left(g\right)^{j+1}}{\left(r-\left|g\left(s\right)-g\left(t\right)\right|\right)^{j+1}}
\]
ou
\begin{equation}\label{eq13}
\bigl|D^{p}\left(\varphi_{\, t}\circ g\right)\left(s\right)\bigr|
\leq\psi^{*}\left(r\right)\, p!\, S^{p}\left(1+\dfrac{N_{S}\left(g\right)}{r-\left|g\left(s\right)-g\left(t\right)\right|}\right)^{p}.
\end{equation}
La conclusion s'ensuit imm\'ediatement. 
\end{proof}
\subsection{Formulation du probl\`eme }
Soient un couple $\left(u,g\right)$ de fonctions r\'eelle-analytiques
sur $J$, $u$ \'etant de niveau $R$ et $g$ de niveau $S$, un point
$t$ de $J$ et une s\'erie enti\`ere \`a coefficients complexes ${\displaystyle \psi\left(w\right)=\sum_{n=0}^{\infty}c_{n}w^{n}}$,
de rayon de convergence $\rho >0$, \`a laquelle est
associ\'ee la fonction somme de s\'erie enti\`ere, $\zeta\longmapsto\varphi_{\, t}\left(\zeta\right)=\psi\left(\zeta-g\left(t\right)\right)$,
d\'efinie dans le disque ouvert $\overset{\circ}{D}\left(g\left(t\right),\rho\right)$. 

D'apr\`es la remarque \eqref{t6}, notant $T_{J_{t}}=2\max\left(R,2S_{J_{t}}\left(\rho\right)\right)$,
comme on a $T_{J_{t}}\geq T$, on obtient les applications
somme de s\'erie enti\`ere \`a valeurs dans $\mathcal{A}\left(J_{t}\right)$
\begin{align*}
\overset{\circ}{D}\left(0,\left(4N_{R}\left(u\right)T_{J_{t}}\right)^{-1}\right)\longrightarrow\mathcal{A}\left(J_{t}\right),& \qquad z\longmapsto\sum_{n=0}^{\infty}\dfrac{z^{n}}{n!}D^{n-1}\left(u^{n}Dg\right)
\\
\overset{\circ}{D}\left(0,\left(4N_{R}\left(u\right)T_{J_{t}}\right)^{-1}\right)\longrightarrow\mathcal{A}\left(J_{t}\right),& \qquad z\longmapsto\sum_{n=0}^{\infty}\dfrac{z^{n}}{n!}D^{n-1}
\left(u^{n}D\left(\varphi_{\, t}\circ g\right)\right).
\end{align*}
La formule \eqref{eq11}  sugg\`ere la question: pour quels nombres
complexes $z$ de module \'eventuellement plus petit que $\left(4N_{R}\left(u\right)T_{J_{t}}\right)^{-1}$,
peut-on, pour tout $s$ du segment $J_{t}$, consid\'erer la
compos\'ee $\varphi_{\, t}\left({\displaystyle \sum_{n=0}^{\infty}\dfrac{z^{n}}{n!}D^{n-1}\left(u^{n}Dg\right)\left(s\right)}\right)$
et avoir l'\'egalit\'e$ $
\[
\varphi_{\, t}\left(\sum_{n=0}^{\infty}\dfrac{z^{n}}{n!}D^{n-1}\left(u^{n}Dg\right)\left(s\right)\right)=\sum_{n=0}^{\infty}\dfrac{z^{n}}{n!}D^{n-1}\left(u^{n}D\left(\varphi_{\, t}\circ g\right)\right)\left(s\right).
\]
\subsection{\'Etude et r\'esolution du probl\`eme}
\textit{Afin de simplifier les \'enonc\'es ult\'erieurs on s'attache dor\'enavant
au cas particulier du segment $J_{t}=\left\{ t\right\} $, sans que
cela modifie la substance de l'\'etude. }De ce fait, notons qu'on a
: $S_{\left\{ t\right\} }\left(\rho\right)=\left(1+\dfrac{N_{S}\left(g\right)}{\rho}\right)S=S_{0}$,
on \'ecrit $T_{\left\{ t\right\} }=T_{0}=2\max\left(R,2S_{0}\right)$.
On remarque que si le nombre complexe $z$ satisfait l'in\'egalit\'e $\left|z\right|<\dfrac{1}{4N_{R}\left(u\right)T_{0}}$
 alors
\[\sum_{n= 0}^{\infty}\frac{|z|^n}{n!} |D^{n-1}\left(  u^n Dg  \right)(t)|< \infty\]
et
\[\sum_{n=0}^{\infty}\dfrac{\left|z\right|^{n}}{n!}\bigl|\left(u^{n}D\left(\varphi_{\, t}\circ g\right)\right)\left(t\right)\bigr|<+\infty.\]
On suppose donc que le nombre complexe $z$ satisfait \`a cette  l'in\'egalit\'e.
Devant disposer de l'expression $\varphi_{\, t}\left({\displaystyle \sum_{n=0}^{\infty}\dfrac{z^{n}}{n!}D^{n-1}\left(u^{n}Dg\right)\left(s\right)}\right)$
on souhaite l'in\'egalit\'e 
\[
\left|{\displaystyle \sum_{n=1}^{\infty}\dfrac{z^{n}}{n!}D^{n-1}\left(u^{n}Dg\right)\left(t\right)}\right|<\rho.
\]
Comme on a 
\[
{\displaystyle \sum_{n=0}^{\infty}\dfrac{z^{n}}{n!}D^{n-1}\left(u^{n}Dg\right)\left(t\right)=g\left(t\right)+\sum_{n=1}^{\infty}\dfrac{z^{n}}{n!}D^{n-1}\left(u^{n}Dg\right)\left(t\right)}
\]
l'in\'egalit\'e souhait\'ee est impliqu\'ee par l'in\'egalit\'e ${\displaystyle \sum_{n=1}^{\infty}\dfrac{\left|z\right|^{n}}{n!}\bigl|D^{n-1}\left(u^{n}Dg\right)\left(t\right)\bigr|<\rho}$.
Gr\^ace \`a la formule de Leibniz on a 

\[
\dfrac{1}{n!}\bigl|D^{n-1}\left(u^{n}Dg\right)\left(t\right)\bigr|\leq\dfrac{1}{n}\sum_{\alpha\in E\left(n-1,n+1\right)}\dfrac{1}{\alpha_{1}!}\bigl|D^{\alpha_{1}}u\left(t\right)\bigr|\cdots\dfrac{1}{\alpha_{n}!}\bigl|D^{\alpha_{n}}u\left(t\right)\bigr|\dfrac{1}{\alpha_{n+1}!}\bigl|D^{1+\alpha_{n+1}}g\left(t\right)\bigr|
\]
\[
\leq N_{R}\left(u\right)^{n}N_{S}\left(g\right)R^{n}\sum_{\alpha\in E\left(n-1,n+1\right)}\left(\dfrac{S}{R}\right)^{1+\alpha_{n+1}}.
\]
Mais en g\'en\'eral pour tout nombre r\'eel $x$ strictement positif on
a 
\[
\sum_{\alpha\in E\left(n-1,n+1\right)}x^{1+\alpha_{n+1}}=x\sum_{j=0}^{n-1}\dbinom{2n-2-j}{n-1}\, x^{j}\leq x\sum_{j=0}^{n-1}2^{2n-2-j}x^{j}=2^{2n-1}\sum_{j=1}^{n}\left(\dfrac{x}{2}\right)^{j}
\]
Il vient alors 
\[
\dfrac{1}{n!}\bigl|D^{n-1}\left(u^{n}Dg\right)\left(t\right)\bigr|\leq\dfrac{1}{2}\left(4N_{R}\left(u\right)R\right)^{n}N_{S}\left(g\right)\sum_{j=1}^{n}\left(\dfrac{S}{2R}\right)^{j}.
\]
Par suite  
\[
\sum_{n=0}^{\infty}\dfrac{\left|z\right|^{n}}{n!}\bigl|D^{n-1}\left(u^{n}Dg\right)\left(t\right)\bigr|\leq
\dfrac{N_{S}\left(g\right)}{2}\left(\sum_{j=1}^{\infty}\left(2SN_{R}\left(u\right)\left|z\right|\right)^{j}\right)\left(\sum_{m=0}^{\infty}\left(4RN_{R}\left(u\right)\left|z\right|\right)^{m}\right).
\]
L'hypoth\`ese faite sur le nombre $z$ conduit aux deux in\'egalit\'es : 
\[2SN_{R}\left(u\right)\left|z\right|<\dfrac{1}{8}\left(1+\dfrac{N_{S}\left(g\right)}{\rho}\right)^{-1},\qquad 4RN_{R}\left(u\right)\left|z\right|<\dfrac{1}{2}.\]

De l\`a on tire: 
\[
\sum_{n=0}^{\infty}\dfrac{\left|z\right|^{n}}{n!}\bigl|D^{n-1}\left(u^{n}Dg\right)\left(t\right)\bigr|<\dfrac{N_{S}\left(g\right)}{8}\left(1+\dfrac{N_{S}\left(g\right)}{\rho}\right)^{-1}<\dfrac{\rho}{8}<\rho.
\]
En conclusion, pour tout nombre complexe $z$ satisfaisant l'in\'egalit\'e $\left|z\right|<\dfrac{1}{4N_{R}\left(u\right)T_{0}}$,
se pose alors la question de l'\'egalit\'e: 
\[
\varphi_{\, t}\left(\sum_{n=0}^{\infty}\dfrac{z^{n}}{n!}D^{n-1}\left(u^{n}Dg\right)\left(t\right)\right)=\sum_{n=0}^{\infty}\dfrac{z^{n}}{n!}D^{n-1}\left(u^{n}D\left(\varphi_{\, t}\circ g\right)\right)\left(t\right).
\]
D\'esignons par $P_{m}$ la fonction polyn\^ome somme partielle d'ordre
$m$ de la s\'erie enti\`ere de somme $\varphi_{\, t}$, \`a savoir 
\[
P_{m}\left(\zeta\right)=\sum_{k=0}^{m}c_{k}\left(\zeta-g\left(t\right)\right)^{k}.
\]
En vertu de l'\'egalit\'e \eqref{eq11}, On peut \'ecrire: 
\[
\varphi_{\, t}\left(\sum_{n=0}^{\infty}\dfrac{z^{n}}{n!}D^{n-1}\left(u^{n}Dg\right)\left(t\right)\right)=\lim_{M}\, P_{M}\left(\sum_{n=0}^{\infty}\dfrac{z^{n}}{n!}D^{n-1}\left(u^{n}Dg\right)\left(t\right)\right)
\]
\[
=\lim_{M}\,\sum_{n=0}^{\infty}\dfrac{z^{n}}{n!}D^{n-1}\left(u^{n}D\left(P_{M}\circ g\right)\right)\left(t\right)=
\lim_{M}\left(\lim_{N}\,\sum_{n=0}^{N}\dfrac{z^{n}}{n!}D^{n-1}\left(u^{n}D\left(P_{M}\circ g\right)\right)\left(t\right)\right).
\]
Le lemme suivant montre qu'on peut intervertir les limites.
\begin{lem}\label{t7}
Dans les conditions ci-dessus \'enonc\'ees, uniform\'ement par rapport au
couple $\left(N,z\right)$ appartenant \`a $\mathbb{N}\times\Delta$,
o\`u $\Delta$ d\'esigne le disque ouvert centr\'e en $z=0$ et de rayon
$\dfrac{1}{4N_{R}\left(u\right)T_{0}}$, on a: 
\[
\sum_{n=0}^{N}\dfrac{z^{n}}{n!}D^{n-1}\left(u^{n}D\left(\varphi_{\, t}\circ g\right)\right)\left(t\right)=\lim_{M}\,\sum_{n=0}^{N}\dfrac{z^{n}}{n!}D^{n-1}\left(u^{n}D\left(P_{M}\circ g\right)\right)\left(t\right).
\]
\end{lem}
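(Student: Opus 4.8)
The plan is to reduce the whole statement to a single majorization of the remainder of the power series. Writing $R_M=\varphi_{\,t}-P_M$, so that $R_M(\zeta)=\sum_{k=M+1}^{\infty}c_k(\zeta-g(t))^k$ is again a power series of radius $\rho$, the linearity of the map $h\longmapsto D^{n-1}\bigl(u^nD(h\circ g)\bigr)$ gives
\[
\sum_{n=0}^N \frac{z^n}{n!}D^{n-1}\bigl(u^n D(\varphi_{\,t}\circ g)\bigr)(t) - \sum_{n=0}^N \frac{z^n}{n!}D^{n-1}\bigl(u^n D(P_M\circ g)\bigr)(t) = \sum_{n=0}^N \frac{z^n}{n!}D^{n-1}\bigl(u^n D(R_M\circ g)\bigr)(t).
\]
It therefore suffices to show that the right-hand side tends to $0$ as $M\to\infty$, uniformly over $(N,z)\in\mathbb{N}\times\Delta$; bounding the finite sum by the corresponding series in $|z|$ from $n=0$ to $\infty$ removes the dependence on $N$ at once.

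First I would apply the proposition preceding this lemma to the power series $R_M=\psi_M(\cdot-g(t))$, where $\psi_M(w)=\sum_{k>M}c_kw^k$, in place of $\psi$. Since $\psi_M$ has radius $\rho$ and its associated majorant is the tail $R_M^*(r)=\sum_{k>M}|c_k|r^k$, inequality \eqref{eq13} evaluated at $s=t$ (so that $|g(t)-g(t)|=0$) yields, for any fixed $r$ with $0<r<\rho$,
\[
\bigl|D^p(R_M\circ g)(t)\bigr| \le R_M^*(r)\, p!\, \sigma^p, \qquad \sigma = S_{\{t\}}(r) = \Bigl(1 + \frac{N_S(g)}{r}\Bigr)S.
\]
Thus, at the point $t$, the function $R_M\circ g$ behaves exactly like a real-analytic function of level $\sigma$ with norm $R_M^*(r)$.

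Next I would repeat verbatim the Leibniz estimate carried out just before the statement for $g$ itself, now with $N_S(g)$ replaced by $R_M^*(r)$ and $S$ replaced by $\sigma$. This gives
\[
\frac{1}{n!}\bigl|D^{n-1}\bigl(u^n D(R_M\circ g)\bigr)(t)\bigr| \le \frac{R_M^*(r)}{2}\,\bigl(4 N_R(u)R\bigr)^n \sum_{j=1}^n\Bigl(\frac{\sigma}{2R}\Bigr)^j,
\]
and, after the same interchange of summations,
\[
\sum_{n=0}^\infty \frac{|z|^n}{n!}\bigl|D^{n-1}\bigl(u^n D(R_M\circ g)\bigr)(t)\bigr| \le \frac{R_M^*(r)}{2}\Bigl(\sum_{j=1}^\infty (2\sigma N_R(u)|z|)^j\Bigr)\Bigl(\sum_{m=0}^\infty (4R N_R(u)|z|)^m\Bigr).
\]
The entire role of the constant $T_0=2\max(R,2S_0)$ is to keep both geometric ratios strictly below $1$ on $\Delta$: for $|z|<(4N_R(u)T_0)^{-1}$ one has $4RN_R(u)|z|<R/T_0\le 1/2$, while $2\sigma N_R(u)|z|<\sigma/(2T_0)\le \sigma/(8S_0)$. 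Choosing $r$ close enough to $\rho$ that $\sigma\le 4S_0$ (possible since $\sigma\to S_0$ as $r\to\rho$) makes the second ratio $\le 1/2$ as well, so the double series is bounded by $C\,R_M^*(r)$ for a constant $C$ independent of $z\in\Delta$, of $N$ and of $M$.

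Finally, since $r<\rho$, the tail $R_M^*(r)$ of the convergent series $\psi^*(r)$ tends to $0$ as $M\to\infty$; hence the difference of the two partial sums is dominated by $C\,R_M^*(r)$ uniformly in $(N,z)\in\mathbb{N}\times\Delta$, which is precisely the asserted uniform convergence and licenses the interchange of limits. The only delicate point is the uniformity in $z$: it rests entirely on the two geometric ratios remaining bounded away from $1$ throughout $\Delta$, which is guaranteed by the definition of $T_0$ together with a fixed choice of $r<\rho$ with $\sigma\le 4S_0$; the vanishing of $R_M^*(r)$ is then merely the convergence of the series defining $\psi$ at the point $r$.
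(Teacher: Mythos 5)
Your proof is correct and follows essentially the same route as the paper's: the same decomposition via the tail $\phi_{M}=\psi_{M}\left(\cdot-g\left(t\right)\right)=\varphi_{\,t}-P_{M}$ (whose $n=0$ term vanishes since $\phi_{M}\left(g\left(t\right)\right)=0$), the same application of inequality \eqref{eq13} at $s=t$ with majorant $\psi_{M}^{\;*}\left(r\right)$, the same Leibniz--combinatorial bound over $E\left(n-1,n+1\right)$ yielding a product of two geometric series whose ratios are kept uniformly below $\tfrac{1}{2}$ on $\Delta$ by the definition of $T_{0}$, and the conclusion via $\psi_{M}^{\;*}\left(r\right)\to0$. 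The only deviation is the numerical choice of $r$ (you require $\sigma\leq4S_{0}$ where the paper takes $r\geq\dfrac{N_{S}\left(g\right)\rho}{\rho+2N_{S}\left(g\right)}$, i.e. $\sigma\leq2S_{0}$), which is immaterial.
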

\begin{proof}
Comme $\left(\varphi_{\, t}-P_{M}\right)\left(g\left(t\right)\right)=0$,
il s'agit de d\'emontrer qu'on a: 
\[
\lim_{M}\left(\sup_{N\geq1\: et\: z\in\Delta}\left|\sum_{n=1}^{N}\dfrac{z^{n}}{n!}D^{n-1}\left(u^{n}D\left(\left(\varphi_{\, t}-P_{M}\right)\circ g\right)\right)\left(t\right)\right|\right)=0.
\]
Tout d'abord on d\'efinit les fonctions $\psi_{M}$ et $\phi_{M}$:
\[
{\displaystyle \psi_{M}\left(w\right)=\sum_{m=M+1}^{\infty}c_{m}w^{m}}\qquad\phi_{M}\left(\zeta\right)=\psi_{M}\left(\zeta-g\left(t\right)\right)=\left(\varphi_{\, t}-P_{M}\right)\left(\zeta\right).
\]
Pour tout $r$ v\'erifiant $0<r<\rho$, on a: 
\[
\dfrac{1}{n!}\bigl|D^{n-1}\left(u^{n}D\left(\phi_{M}\circ g\right)\right)\left(t\right)\bigr|
\]
\[
\leq\dfrac{1}{n}\sum_{\alpha\in E\left(n-1,n+1\right)}\dfrac{1}{\alpha_{1}!}\bigl|D^{\alpha_{1}}u\left(t\right)\bigr|\cdots\dfrac{1}{\alpha_{n}!}
\bigl|D^{\alpha_{n}}u\left(t\right)\bigr|\dfrac{1}{\alpha_{n+1}!}\bigl|D^{1+\alpha_{n+1}}\left(\phi_{M}\circ g\right)\left(t\right)\bigr|.
\]
Or, la fonction $\phi_{M}$ \'etant \`a $\psi_{M}$ ce que la fonction
$\varphi_{\, t}$ est \`a $\psi$, en vertu de l'in\'egalit\'e \eqref{eq13}, pour tout entier
naturel $p$ on a: 
\[
\bigl|D^{p}\left(\phi_{M}\circ g\right)\left(t\right)\bigr|\leq\psi_{M}^{\;*}\left(r\right)p!\, S^{p}\left(1+\dfrac{N_{S}\left(g\right)}{r}\right)^{p}.
\]
Donc: 
\[
\dfrac{1}{n!}\bigl|D^{n-1}\left(u^{n}D\left(\phi_{M}\circ g\right)\right)\left(t\right)\bigr|\leq\psi_{M}^{\;*}
\left(r\right)N_{R}\left(u\right)^{n}R^{n}\sum_{\alpha\in E\left(n-1,n+1\right)}\left(\dfrac{S}{R}\left(1+\dfrac{N_{S}\left(g\right)}{r}\right)\right)^{1+\alpha_{n+1}}.
\]
Or, pour tout nombre r\'eel $x$ strictement positif:
\[\sum_{\alpha\in E\left(n-1,n+1\right)}x^{1+\alpha_{n+1}} \leq 2^{2n-1}\sum_{j=1}^{n}\left(\dfrac{x}{2}\right)^{j}.\]
Il vient alors: 
\[
\dfrac{1}{n!}\bigl|D^{n-1}\left(u^{n}D\left(\phi_{M}\circ g\right)\right)\left(t\right)\bigr|\leq\dfrac{1}{2}\psi_{M}^{\;*}\left(r\right)\left(4N_{R}\left(u\right)R\right)^{n}
\sum_{j=1}^{n}\left(\dfrac{S}{2R}\left(1+\dfrac{N_{S}\left(g\right)}{r}\right)\right)^{j}
\]
puis l'in\'egalit\'e: 
\[
\sum_{n=1}^{N}\dfrac{\left|z\right|^{n}}{n!}\bigl|D^{n-1}\left(u^{n}D\left(\phi_{M}\circ g\right)\right)\left(t\right)\bigr|\leq\dfrac{\psi_{M}^{\;*}\left(r\right)}{2\left(1-4\left|z\right|N_{R}
\left(u\right)R\right)}\sum_{j=1}^{N}\left(2\left|z\right|N_{R}\left(u\right)S\left(1+\dfrac{N_{S}\left(g\right)}{r}\right)\right)^{j}.
\]
L'hypoth\`ese sur le nombre complexe $z$ conduit aux deux in\'egalit\'es: 
\[2\left|z\right|SN_{R}\left(u\right)<\dfrac{1}{8}\left(1+\dfrac{N_{S}\left(g\right)}{\rho}\right)^{-1}, \quad 4RN_{R}\left(u\right)\left|z\right|<\dfrac{1}{2}.\]
De l\`a on tire: 
\[
\sum_{n=0}^{\infty}\dfrac{\left|z\right|^{n}}{n!}\bigl|D^{n-1}\left(u^{n}D\left(\phi_{M}\circ g\right)\right)\left(t\right)\bigr|<\psi_{M}^{\;*}\left(r\right)\sum_{j=1}^{\infty}\dfrac{1}{8^{j}}
\left(1+\dfrac{N_{S}\left(g\right)}{r}\right)^{j}\left(1+\dfrac{N_{S}\left(g\right)}{\rho}\right)^{-j}.
\]
Choisissant convenablement le nombre $r$, par exemple $\dfrac{N_{S}\left(g\right)\rho}{\rho+2N_{S}\left(g\right)}\leq r<\rho$,
on obtient: 
\[
\sup_{N\geq1\: et\: z\in\Delta}\,\sum_{n=1}^{N}\dfrac{\left|z\right|^{n}}{n!}\bigl|D^{n-1}\left(u^{n}D\left(\phi_{M}\circ g\right)\right)\left(t\right)\bigr|\leq\psi_{M}^{\;*}\left(r\right)
\sum_{j=1}^{\infty}\dfrac{1}{4^{j}}=\dfrac{1}{3}\sum_{m=M+1}^{\infty}\left|c_{m}\right|r^{m}.
\]
\end{proof}
\noindent En cons\'equence, les limites ayant \'et\'e interverties, on aboutit \`a:
\[
\varphi_{\, t}\left(\sum_{n=0}^{\infty}\dfrac{z^{n}}{n!}D^{n-1}\left(u^{n}Dg\right)\left(t\right)\right)=\lim_{N}\left(\lim_{M}\,\sum_{n=0}^{N}\dfrac{z^{n}}{n!}D^{n-1}\left(u^{n}D\left(P_{M}\circ g\right)\right)\left(t\right)\right)=
\]
\[
=\lim_{N}\,\sum_{n=0}^{N}\dfrac{z^{n}}{n!}D^{n-1}\left(u^{n}D\left(\varphi_{\, t}\circ g\right)\right)\left(t\right)=\sum_{n=0}^{\infty}\dfrac{z^{n}}{n!}D^{n-1}\left(u^{n}D\left(\varphi_{\, t}\circ g\right)\right)\left(t\right)
\]
de sorte qu'on \'enonce le 
\begin{thm}\label{t8}
Soient un couple $\left(u,g\right)$ de fonctions r\'eelle-analytiques
sur le segment $J$, $u$ de niveau $R$, $g$ de niveau $S$, et
une s\'erie enti\`ere \`a coefficients complexes ${\displaystyle \psi\left(w\right)=\sum_{n=0}^{\infty}c_{n}w^{n}}$,
de rayon de convergence $\rho$ strictement positif. La fonction somme
de s\'erie enti\`ere, 
\[\zeta\longmapsto\varphi_{\, t}\left(\zeta\right)=\psi\left(\zeta-g\left(t\right)\right)\]
est d\'efinie dans le disque ouvert $\overset{\circ}{D}\left(g\left(t\right),\rho\right)$.
On note \[S_{0}=\left(1+\dfrac{N_{S}\left(g\right)}{\rho}\right)S,\quad
T_{0}=2\max\left(R,2S_{0}\right).\]
Pour tout point $t$ du segment $J$, et pour tout nombre complexe
$z$ v\'erifiant $\left|z\right|<\dfrac{1}{4\, N_{R}\left(u\right)T_{0}}$
le nombre complexe ${\displaystyle \sum_{n=0}^{\infty}\dfrac{1}{n!}D^{n-1}\left(u^{n}Dg\right)\left(t\right)z^{n}}$
appartient au disque ouvert $\overset{\circ}{D}\left(g\left(t\right),\rho\right)$
et on a la relation de commutation
\begin{equation}
\varphi_{\, t}\left(\,\sum_{n=0}^{\infty}\dfrac{1}{n!}D^{n-1}\left(u^{n}Dg\right)\left(t\right)z^{n}\right)=\sum_{n=0}^{\infty}\dfrac{1}{n!}D^{n-1}\left(u^{n}D\left(\varphi_{\, t}
\circ g\right)\right)\left(t\right)z^{n}.\label{eq:1-4}
\end{equation}
\end{thm}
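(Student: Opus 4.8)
The plan is to assemble the estimates and the interchange-of-limits lemma established above into the two assertions of the theorem, so that the whole argument rests on the Lagrange product formula \eqref{eq11} together with Lemma \ref{t7}.

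First I would dispatch the membership claim, namely that
\[
w\left(z\right)=\sum_{n=0}^{\infty}\dfrac{1}{n!}D^{n-1}\left(u^{n}Dg\right)\left(t\right)z^{n}
\]
belongs to the open disk $\overset{\circ}{D}\left(g\left(t\right),\rho\right)$. The convention $D^{-1}\left(Dg\right)=g$ makes the $n=0$ term equal to $g\left(t\right)$, so that $\left|w\left(z\right)-g\left(t\right)\right|$ is majorized by the tail $\sum_{n=1}^{\infty}\tfrac{\left|z\right|^{n}}{n!}\bigl|D^{n-1}\left(u^{n}Dg\right)\left(t\right)\bigr|$. The Leibniz expansion over $E\left(n-1,n+1\right)$, combined with Stirling and with the two inequalities forced by the hypothesis $\left|z\right|<\left(4N_{R}\left(u\right)T_{0}\right)^{-1}$, bounds this tail by $\rho/8<\rho$; this is precisely the computation carried out above, and it settles the first assertion.

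Next I would establish the commutation relation \eqref{eq:1-4}. Let $P_{M}$ denote the $M$-th partial sum of the series defining $\varphi_{\,t}$. Applying formula \eqref{eq11} of Corollary \ref{t10} with the base function taken to be $u$ and the polynomial $P=P_{M}$, then evaluating the resulting identity of entire-series-valued functions at the point $t$ and at our fixed $z$, I obtain
\[
P_{M}\Bigl(\sum_{n=0}^{\infty}\dfrac{z^{n}}{n!}D^{n-1}\left(u^{n}Dg\right)\left(t\right)\Bigr)=\sum_{n=0}^{\infty}\dfrac{z^{n}}{n!}D^{n-1}\left(u^{n}D\left(P_{M}\circ g\right)\right)\left(t\right).
\]
Since $w\left(z\right)$ was just shown to lie strictly inside the disk of convergence, the left-hand side tends to $\varphi_{\,t}\left(w\left(z\right)\right)$ as $M\to\infty$, while the right-hand side presents itself as a double limit in $M$ and in the outer summation index $N$.

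The crux is therefore the legitimacy of interchanging these two limits, and this is exactly what Lemma \ref{t7} provides: it asserts that $\sum_{n=0}^{N}\tfrac{z^{n}}{n!}D^{n-1}\left(u^{n}D\left(P_{M}\circ g\right)\right)\left(t\right)$ converges to $\sum_{n=0}^{N}\tfrac{z^{n}}{n!}D^{n-1}\left(u^{n}D\left(\varphi_{\,t}\circ g\right)\right)\left(t\right)$ as $M\to\infty$, uniformly with respect to $\left(N,z\right)$ in $\BN\times\Delta$. Granting this, I would let $N\to\infty$ after $M\to\infty$, which produces the right-hand member of \eqref{eq:1-4} and matches it with the left-hand limit $\varphi_{\,t}\left(w\left(z\right)\right)$. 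The main obstacle, absorbed into Lemma \ref{t7}, is to control the composed tail $\left(\varphi_{\,t}-P_{M}\right)\circ g$ after the operator $D^{n-1}\left(u^{n}D(\cdot)\right)$; this is where the Fa\`a di Bruno estimate \eqref{eq13} is applied to $\phi_{M}=\varphi_{\,t}-P_{M}$, whose bound carries the factor $\psi_{M}^{\,*}\left(r\right)=\sum_{m>M}\left|c_{m}\right|r^{m}$ tending to $0$, thereby forcing the uniform limit to vanish.
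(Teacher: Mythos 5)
Your proposal is correct and takes essentially the same route as the paper's proof: the membership claim is the paper's own tail estimate via the Leibniz expansion over $E\left(n-1,n+1\right)$ together with the two inequalities forced by $\left|z\right|<\left(4N_{R}\left(u\right)T_{0}\right)^{-1}$ (yielding the bound $\rho/8<\rho$), and the commutation relation \eqref{eq:1-4} is obtained exactly as in the paper by applying \eqref{eq11} to the partial sums $P_{M}$ and then interchanging the $M$- and $N$-limits by the uniform convergence of Lemma \ref{t7}, whose proof rests on the Fa\`a di Bruno estimate \eqref{eq13} applied to $\phi_{M}=\varphi_{\,t}-P_{M}$ with $\psi_{M}^{\;*}\left(r\right)\to0$. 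The only cosmetic slip is your mention of Stirling, which the membership estimate does not actually use --- it relies instead on the binomial bound $\binom{2n-2-j}{n-1}\leq2^{2n-2-j}$ --- but nothing hinges on this.
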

\begin{cor}\label{t9}
Soit un couple $\left(u,g\right)$ de fonctions r\'eelle-analytiques
sur le segment $J$, $u$ de niveau $R$, $g$ de niveau $S$, et
on note $T=2\max\left(R,2S\right)$.\textup{ }Soit une s\'erie enti\`ere
\`a coefficients complexes ${\displaystyle \phi\left(w\right)=\sum_{n=0}^{\infty}c_{n}w^{n}}$,
de rayon de convergence infini. 

Pour tout point $t$ du segment $J$, et pour tout nombre complexe
$z$ v\'erifiant $\left|z\right|<\dfrac{1}{4\, N_{R}\left(u\right)T}$
on a la relation: 
\begin{equation}
\phi\left(\,\sum_{n=0}^{\infty}\dfrac{1}{n!}D^{n-1}\left(u^{n}Dg\right)\left(t\right)z^{n}\right)=\sum_{n=0}^{\infty}\dfrac{1}{n!}D^{n-1}\left(u^{n}D\left(\phi\circ g\right)\right)\left(t\right)z^{n}.\label{eq:2-2}
\end{equation}
\end{cor}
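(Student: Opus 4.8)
The plan is to obtain this corollary as an immediate specialisation of \thmref{t8}, the only device being a judicious, $t$-dependent choice of the power series $\psi$ that absorbs the translation by $g(t)$.

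First I would fix a point $t$ of the segment $J$ and set $\psi(w)=\phi(w+g(t))$, the Taylor expansion of the entire function $\phi$ about the point $g(t)$. Since $\phi$ is entire, so is $\psi$, and its radius of convergence is $\rho=\infty$. With this choice the associated function of \thmref{t8} is
\[
\varphi_{\,t}(\zeta)=\psi(\zeta-g(t))=\phi\bigl((\zeta-g(t))+g(t)\bigr)=\phi(\zeta),
\]
so that $\varphi_{\,t}=\phi$ identically, whence also $\varphi_{\,t}\circ g=\phi\circ g$ on $J$ and $D^{n-1}(u^{n}D(\varphi_{\,t}\circ g))=D^{n-1}(u^{n}D(\phi\circ g))$ for every $n$.

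Next I would verify that the constants of \thmref{t8} degenerate to those of the corollary. Because $\rho=\infty$ one has $S_{0}=(1+N_{S}(g)/\rho)S=S$, and therefore $T_{0}=2\max(R,2S_{0})=2\max(R,2S)=T$; the admissibility range $|z|<(4N_{R}(u)T_{0})^{-1}$ of the theorem is then exactly $|z|<(4N_{R}(u)T)^{-1}$. Moreover, since $\rho=\infty$, the open disc $\overset{\circ}{D}(g(t),\rho)$ is all of $\mathbb{C}$, so the number $\sum_{n=0}^{\infty}\frac{1}{n!}D^{n-1}(u^{n}Dg)(t)z^{n}$ automatically lies in the domain of $\phi$ and the left-hand side of \eqref{eq:2-2} is unconditionally meaningful.

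Finally, applying \thmref{t8} to the pair $(u,g)$ with this series $\psi$ at the fixed point $t$ and substituting $\varphi_{\,t}=\phi$ into relation \eqref{eq:1-4} produces \eqref{eq:2-2} verbatim; since $t$ was arbitrary, the relation holds at every point of $J$. The one step calling for a word of justification---and the only mild obstacle---is the legitimacy of letting $\psi=\psi_{t}$ depend on $t$: this is harmless, for \thmref{t8} is valid for each individual $t$ with its own fixed series, and we merely invoke it once per point $t$ with the corresponding $\psi_{t}$.
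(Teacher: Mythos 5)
Your proposal is correct and coincides with the derivation the paper leaves implicit: the corollary is precisely Theorem~\ref{t8} applied, for each fixed $t$, to the recentred series $\psi(w)=\phi\left(w+g\left(t\right)\right)$, for which $\varphi_{\,t}=\phi$, $\rho=\infty$, hence $S_{0}=S$ and $T_{0}=T$. Your closing remark that the $t$-dependence of $\psi$ is harmless, since the theorem is invoked pointwise, is exactly the right observation.
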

\begin{rem}
Dans le contexte pr\'esent le r\'esultat de P. J. Olver \cite{F4}
s'\'ecrit bri\`evement comme suit. Pour tout point $t$ du segment $J$,
et pour tout nombre complexe $z$ v\'erifiant $\left|z\right|<\dfrac{1}{16\, N_{R}\left(u\right)R}$
on a la relation 
\[
\phi\left(\,\sum_{n=0}^{\infty}\dfrac{1}{n!}D^{n-1}\left(u^{n}Du\right)\left(t\right)z^{n}\right)=\sum_{n=0}^{\infty}\dfrac{1}{n!}D^{n-1}\left(u^{n}D\left(\phi\circ u\right)\right)\left(t\right)z^{n}.
\]
\end{rem}
Soient  $\left(u,g\right)$ un couple de fonctions r\'eelle-analytiques
sur le segment $J$, $u$ de niveau $R$, $g$ de niveau $S$. On
peut d\'esigner par $L_{u}\left(g\right)$ l'application somme de la
s\'erie enti\`ere\textbf{\textit{ }}\`a valeurs dans $\mathcal{A}\left(J\right)$
\[
\sum_{n=0}^{\infty}\dfrac{z^{n}}{n!}D^{n-1}\left(u^{n}Dg\right)
\]
dont le rayon de convergence est, selon le corollaire \eqref{t9}, sup\'erieur
ou \'egal \`a $\dfrac{1}{4\, N_{R}\left(u\right)T}$. Dans la suite on
pr\'ef\`ere adopter l'\'ecriture $L_{u}\left(g,z\right)$ au lieu de $L_{u}\left(g\right)\left(z\right)$.
Le r\'esultat qui suit n'est autre que la \guillemotleft formule d'inversion
de Lagrange dans le cadre des fonctions r\'eelle-analytiques \guillemotright,
obtenue donc \`a partir de la formule du produit de Lagrange, de l'extension
du r\'esultat de Olver, et sans passer par la formule int\'egrale de Cauchy.
De plus, notons que la \guillemotleft formule d'inversion de Lagrange dans le
cadre des fonctions holomorphes \guillemotright peut se d\'eduire assez facilement
de la proposition qui suit. 
\begin{prop}
\begin{enumerate}
 \item 
Soit un couple $\left(u,g\right)$ de fonctions r\'eelle-analytiques
sur le segment $J$, $u$ de niveau $R$, $g$ de niveau $S$. On
note $T=2\max\left(R,2S\right)$. Pour tout point $t$ du segment $J$
et pour tout nombre complexe $z$ v\'erifiant $\left|z\right|<\dfrac{1}{4\, N_{R}\left(u\right)T}$,
si le nombre complexe $t+z\, L_{u}\left(u,z\right)\left(t\right)$
appartient au segment $J$, alors on a: 
\[
L_{u}\left(g,z\right)\left(t\right)=g\left(t+z\, L_{u}\left(u,z\right)\left(t\right)\right).
\]
\item Pour tout point $t$ du segment $J$ et pour tout nombre
r\'eel $z$ v\'erifiant $\left|z\right|<\dfrac{1}{16\, N_{R}\left(u\right)R}$,
si le nombre complexe $t+z\, L_{u}\left(u,z\right)\left(t\right)$
appartient au segment $J$, alors on a: 
\[
L_{u}\left(u,z\right)\left(t\right)=u\left(t+z\, L_{u}\left(u,z\right)\left(t\right)\right).
\]
Autrement dit, si le nombre $x=t+z\, L_{u}\left(u,z\right)\left(t\right)$
appartient au segment $J$, alors il satisfait l'\'equation $t+z\, u\left(x\right)=x$.
La formule 
\[
x=t+\sum_{n=1}^{\infty}\dfrac{z^{n}}{n!}D^{n-1}\left(u^{n}\right)\left(t\right)
\]
est la formule d'inversion de Lagrange pour l'\'equation $t+z\, u\left(x\right)=x$
et relativement aux fonctions r\'eelle-analytiques. De plus, pour tout
nombre r\'eel $z$ v\'erifiant $\left|z\right|<\dfrac{1}{8\, N_{R}\left(u\right)T}$,
l'expression de $g\left(x\right)$, o\`u le nombre $x=t+z\, L_{u}\left(u,z\right)\left(t\right)$
est solution r\'eelle de l'\'equation $t+z\, u\left(x\right)=x$, est:
\[
g\left(x\right)=g\left(t\right)+\sum_{n=1}^{\infty}\dfrac{z^{n}}{n!}D^{n-1}\left(u^{n}Dg\right)\left(t\right)
\]
formule qui compl\`ete la formule d'inversion de Lagrange.
\end{enumerate}
\end{prop}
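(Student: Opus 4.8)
The plan is to read the statement as the Lagrange--B\"urmann formula and to derive it from the commutation relations \eqref{eq:1-4} and \eqref{eq:2-2}, the only extra ingredient being an intrinsic description of the point $x$. First I would feed the identity function $\iota:s\mapsto s$ on $J$ (which is entire, hence real-analytic of finite level) into the operator $L_u$. Since $D\iota=1$ and $u^{n}\,Du=\tfrac{1}{n+1}D(u^{n+1})$, a termwise computation --- using the convention $D^{-1}(Df)=f$ for the index $n=0$ --- gives
\[
L_u(\iota,z)(t)=t+\sum_{n=1}^{\infty}\frac{z^{n}}{n!}D^{n-1}\left(u^{n}\right)(t)=t+z\,L_u(u,z)(t).
\]
Thus the number $x=t+z\,L_u(u,z)(t)$ occurring in the statement is exactly $L_u(\iota,z)(t)$, and the series $t+\sum_{n\ge1}\tfrac{z^{n}}{n!}D^{n-1}(u^{n})(t)$ announced in part~(2) is merely this value written out.

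The heart of the argument is that the evaluation $g\mapsto L_u(g,z)(t)$ commutes with composition, which is the content of \eqref{eq11} for polynomials and of Theorem~\ref{t8} in the analytic limit. I would apply \eqref{eq:1-4} with the roles exchanged, taking $\iota$ as the inner real-analytic function and the Taylor series of $g$ at $t$, namely $\psi(w)=g(t+w)$, as the outer power series; then $\varphi_{\,t}(\zeta)=\psi(\zeta-t)=g(\zeta)$, so that $\varphi_{\,t}\circ\iota=g$ and \eqref{eq:1-4} becomes
\[
g\bigl(L_u(\iota,z)(t)\bigr)=L_u(\varphi_{\,t}\circ\iota,z)(t)=L_u(g,z)(t),
\]
that is $L_u(g,z)(t)=g(x)$, which is part~(1). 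Part~(2) is then the specialization $g=u$ (where $S=R$, $T=4R$, and $\tfrac{1}{4N_R(u)T}=\tfrac{1}{16N_R(u)R}$): it yields $L_u(u,z)(t)=u(x)$, whence $x=t+z\,L_u(u,z)(t)=t+z\,u(x)$, so $x$ solves $t+z\,u(X)=X$; the concluding formula for $g(x)$ is once more part~(1).

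The step I expect to resist is the bookkeeping of the radii. Used with inner function $\iota$, Theorem~\ref{t8} only delivers the identity $L_u(g,z)(t)=g(x)$ on a disk governed by the level of $\iota$ and by the radius of the Taylor series of $g$, which need not reach the announced value $\tfrac{1}{4N_R(u)T}$ with $T=2\max(R,2S)$. To secure the stated range I would invoke analyticity: by the Corollary to Lemma~\ref{t3} (see Remark~\ref{t6}) the map $z\mapsto L_u(g,z)(t)$ is holomorphic on $|z|<\tfrac{1}{4N_R(u)T}$, while $z\mapsto g(x(z))$ is holomorphic on the set where $x(z)=t+z\,L_u(u,z)(t)$ stays in the complex neighbourhood of $J$ to which the real-analytic $g$ extends; the two agree near $z=0$, hence throughout the connected region where both are defined. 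Here the hypothesis $x\in J$ is precisely what makes $g(x)$ meaningful at the point considered, and in part~(2) the halved radius $\tfrac{1}{8N_R(u)T}$ is what keeps the real solution $x$ of $t+z\,u(X)=X$ inside $J$ and the series $\sum\tfrac{z^{n}}{n!}D^{n-1}(u^{n}Dg)(t)$ absolutely convergent, so that the Lagrange inversion formula and its companion for $g(x)$ hold as genuine numerical identities.
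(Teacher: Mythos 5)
Your reduction is genuinely different from the paper's proof and the skeleton is attractive: the computation $L_{u}\left(\iota,z\right)\left(t\right)=t+z\,L_{u}\left(u,z\right)\left(t\right)$ for the identity function $\iota$ is correct, part (2) as the specialization $g=u$ (with $S=R$, $T=4R$, so $\frac{1}{4N_{R}\left(u\right)T}=\frac{1}{16N_{R}\left(u\right)R}$) matches the paper, and applying Theorem \ref{t8} to the pair $\left(u,\iota\right)$ with outer series $\psi\left(w\right)=\sum_{m}\frac{D^{m}g\left(t\right)}{m!}w^{m}$ is legitimate (the composition $\varphi_{\,t}\circ\iota$ agrees with $g$ near $t$, which is all the right-hand side of \eqref{eq:1-4} uses). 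But the step you yourself flag is where the proof breaks, and your analyticity patch does not close it. The map $x\left(z\right)=t+z\,L_{u}\left(u,z\right)\left(t\right)$ is holomorphic on $\Delta=\bigl\{\left|z\right|<\frac{1}{4N_{R}\left(u\right)T}\bigr\}$, and $z\longmapsto g\left(x\left(z\right)\right)$ is defined only on the open set $U=\bigl\{ z\in\Delta\;;\;x\left(z\right)\in V\bigr\}$, where $V$ is a complex neighbourhood of $J$ to which $g$ extends. The identity theorem yields $L_{u}\left(g,z\right)\left(t\right)=g\left(x\left(z\right)\right)$ only on the connected component of $0$ in $U$; but the points the proposition is about, those $z_{0}\in\Delta$ with $x\left(z_{0}\right)\in J$, form a set with empty interior in general, a given $z_{0}$ may lie in a component of $U$ not containing $0$, and nothing in your argument connects it to $0$ inside $U$: the only a priori bound available, $\left|x\left(z\right)-t\right|<\rho_{g}$, keeps $x\left(z\right)$ in a fat disc, not in the thin neighbourhood $V$. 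A second, smaller elision: $\varphi_{\,t}\left(\zeta\right)=\psi\left(\zeta-t\right)$ equals $g\left(\zeta\right)$ only where the Taylor series of $g$ at $t$ represents $g$, so the final identification $\varphi_{\,t}\left(x\right)=g\left(x\right)$ at the possibly distant point $x\in J$ requires continuation along the real segment $\left[t,x\right]\subset J\cap\overset{\circ}{D}\left(t,\rho_{g}\right)$ (the paper also treats this lightly, and there it is harmless for exactly this reason).

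The gap is repairable inside your own scheme, and the repair shows your pessimism about the radius was unnecessary: the level of $\iota$ is a free parameter. For $0<S'\leq\left\Vert \iota\right\Vert _{J}^{-1}$ one has $N_{S'}\left(\iota\right)=1/S'$, hence in Theorem \ref{t8} $S_{0}=S'+1/\rho_{g}$ and $T_{0}=2\max\left(R,2S'+2/\rho_{g}\right)$; since $\rho_{g}\geq1/S$, letting $S'\downarrow0$ the discs of validity exhaust $\left|z\right|<\frac{1}{8N_{R}\left(u\right)\max\left(R,2/\rho_{g}\right)}$, which contains the stated disc $\left|z\right|<\frac{1}{4N_{R}\left(u\right)T}$, and the theorem then also hands you $x\left(z\right)\in\overset{\circ}{D}\left(t,\rho_{g}\right)$ there; the real-segment continuation finishes, for each admissible $z$ separately, with no connectivity issue. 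For comparison, the paper never invokes Theorem \ref{t8} with inner function $\iota$: it rearranges $L_{u}\left(g,z\right)$ by Leibniz into $\sum_{m}L_{u}\left(u^{m},z\right)\frac{z^{m}}{m!}D^{m}g$ (the interchange justified by $N_{2T}$-estimates), uses Corollary \ref{t9} only through $L_{u}\left(u^{m},z\right)=\left(L_{u}\left(u,z\right)\right)^{m}$, obtains the strict bound $\left|z\,L_{u}\left(u,z\right)\left(t\right)\right|<\rho_{g}$ from convergence together with the open-mapping property of $z\longmapsto z\,L_{u}\left(u,z\right)\left(t\right)$, and then sums the Taylor series of $g$ at $t$ --- arriving at the same key identity $L_{u}\left(g,z\right)\left(t\right)=\psi\left(x\left(z\right)-t\right)$ on the whole stated disc, which is what your argument must (and, once amended as above, does) produce.
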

\begin{proof}
\noindent \begin{enumerate}
 \item Supposant le nombre complexe $z$ v\'erifier $\displaystyle \left|z\right|<\dfrac{1}{4\, TN_{R}\left(u\right)}$,
on transforme l'expression de $L_{u}\left(g,z\right)$ ainsi: 
\[
L_{u}\left(g,z\right)=g+\sum_{n=1}^{\infty}\left(\,\sum_{k=0}^{n-1}\dfrac{z^{n}}{n!}\binom{n-1}{k}\, D^{n-k-1}\left(u^{n}\right)D^{k+1}g\right).
\]
Afin de justifier l'interversion dans $\mathcal{A}\left(J\right)$
des sommes ci-dessus, on effectue les majorations suivantes: 
\begin{align*}
{}&\sum_{n=1}^{\infty}\sum_{k=0}^{n-1}\dfrac{\left|z\right|^{n}}{n!}\binom{n-1}{k}\, N_{2T}\left(D^{n-k-1}\left(u^{n}\right)\right)N_{2T}\left(D^{k+1}g\right)\\
&\leq\sum_{n=1}^{\infty}\sum_{k=0}^{n-1}\dfrac{\left|z\right|^{n}}{n}\left(k+1\right)\left(2T\right)^{n-k-1}N_{T}\left(u^{n}\right)\left(2T\right)^{k+1}N_{T}\left(g\right)\\
&\leq\sum_{n=1}^{\infty}\sum_{k=0}^{n-1}\dfrac{\left|z\right|^{n}}{n}\left(k+1\right)\left(2T\right)^{n}2^{n-1}\left(N_{\frac{T}{2}}\left(u\right)\right)^{n}N_{T}\left(g\right).
\end{align*}
 Comme on a simultan\'ement $T\geq2R$ et $T\geq4S>S$, on aboutit \`a:
 \begin{align*}
&\sum_{n=1}^{\infty}\sum_{k=0}^{n-1}\dfrac{\left|z\right|^{n}}{n!}\binom{n-1}{k}\, N_{2T}\left(D^{n-k-1}\left(u^{n}\right)\right)N_{2T}\left(D^{k+1}g\right)
\\
&\leq\dfrac{N_{S}\left(g\right)}{2}\sum_{n=1}^{\infty}\dfrac{\left|z\right|^{n}}{n}\left(4\, T\, N_{R}\left(u\right)\right)^{n}\left(\,\sum_{k=0}^{n-1}\left(k+1\right)\right)\\&= \dfrac{N_{S}\left(g\right)}{4}\sum_{n=1}^{\infty}\left(n+1\right)\left|z\right|^{n}\left(4TN_{R}\left(u\right)\right)^{n}<+\infty.
\end{align*}
L'hypoth\`ese $\left|z\right|<\dfrac{1}{4\, N_{R}\left(u\right)T}$
permet donc l'interversion, et on obtient: 
\begin{align*}
L_{u}\left(g,z\right)&= g+\sum_{k=0}^{\infty}\dfrac{1}{k!}D^{k+1}g\left(\,\sum_{n=k+1}^{\infty}\dfrac{z^{n}}{\left(n-k-1\right)!\, n}D^{n-k-1}\left(u^{n}\right)\right)
\\
&= g+\sum_{k=0}^{\infty}\dfrac{z^{k+1}}{k!}D^{k+1}g\left(\,\sum_{j=0}^{\infty}\dfrac{z^{n}}{j!}D^{j}\left(\dfrac{u^{j+k+1}}{j+k+1}\right)\right)
\\
&= g+\sum_{k=0}^{\infty}\dfrac{z^{k+1}}{k!}D^{k+1}g\left(\,\sum_{j=0}^{\infty}\dfrac{z^{n}}{j!}D^{j-1}\left(u^{j}D\left(\dfrac{u^{k+1}}{k+1}\right)\right)\right)
\\
&= g+\sum_{m=1}^{\infty}\dfrac{z^{m}}{m!}D^{m}g\left(\,\sum_{j=0}^{\infty}\dfrac{z^{n}}{j!}D^{j-1}\left(u^{j}D\left(u^{m}\right)\right)\right)=\sum_{m=0}^{\infty}L_{u}\left(u^{m},z\right)\dfrac{z^{m}}{m!}D^{m}g.
\end{align*}
En vertu du corollaire \eqref{t9}, comme on a $\left|z\right|<\dfrac{1}{4\, N_{R}\left(u\right)T}\leq\dfrac{1}{8\, N_{R}\left(u\right)R}$,
pour tout entier naturel $m$ on obtient $L_{u}\left(u^{m},z\right)=\left(L_{u}\left(u,z\right)\right)^{m}$.
De sorte que pour tout $z$ v\'erifiant $\left|z\right|<\dfrac{1}{4\, N_{R}\left(u\right)T}$
et pour tout point $t$ du segment $J$: 
\[
L_{u}\left(g,z\right)\left(t\right)=\sum_{m=0}^{\infty}\dfrac{1}{m!}\left(z\, L_{u}\left(u,z\right)\left(t\right)\right)^{m}D^{m}g\left(t\right).
\]
Notant $\rho_{g}$ le rayon de convergence de la s\'erie enti\`ere ${\displaystyle \sum_{m=0}^{\infty}\dfrac{w^{m}}{m!}D^{m}g\left(t\right)}$.
On constate donc que, pour tout $z$ v\'erifiant $\displaystyle \left|z\right|<\dfrac{1}{4\, N_{R}\left(u\right)T}$
et pour tout point $t$ du segment $J$, on a $\displaystyle \left|z\, L_{u}\left(u,z\right)\left(t\right)\right|\leq\rho_{g}$.
Or, la fonction $z\longmapsto z\, L_{u}\left(u,z\right)\left(t\right)$,
qui est somme d'une s\'erie enti\`ere \`a coefficients complexes, est une
application ouverte, d'o\`u l'in\'egalit\'e stricte $\left|z\, L_{u}\left(u,z\right)\left(t\right)\right|<\rho_{g}$
pour tous $z$ et $t$ comme ci-dessus. En cons\'equence, pour tout
$z$ v\'erifiant $\displaystyle \left|z\right|<\dfrac{1}{4\, N_{R}\left(u\right)T}$
et pour tout point $t$ du segment $J$, et si le nombre complexe
$\displaystyle t+z\, L_{u}\left(u,z\right)\left(t\right)$ appartient au segment
$J$, on obtient: 
\[
L_{u}\left(g,z\right)\left(t\right)=\sum_{m=0}^{\infty}\dfrac{1}{m!}\Bigl(\left(t+z\, L_{u}\left(u,z\right)\left(t\right)\right)-t\Bigr)^{m}D^{m}g\left(t\right)=g\left(t+z\, L_{u}\left(u,z\right)\left(t\right)\right).
\]
\item Si $g=u$, on a $T=4R$. Remarquant l'\'egalit\'e $\displaystyle z\, u\left(x\right)=z\, L_{u}\left(u,z\right)\left(t\right)=x-t$,
on obtient: 
\[
x=t+z\, L_{u}\left(u,z\right)\left(t\right)=t+\sum_{n=0}^{\infty}\dfrac{z^{n+1}}{n!}D^{n-1}\left(u^{n}Du\right)\left(t\right)=t+\sum_{n=1}^{\infty}\dfrac{z^{n}}{n!}D^{n-1}\left(u^{n}\right)\left(t\right)
\]
Les conditions $\displaystyle \left|z\right|<\dfrac{1}{4\, N_{R}\left(u\right)T}$
et $\displaystyle \left|z\right|<\dfrac{1}{16\, N_{R}\left(u\right)R}$ sont simultan\'ement
r\'ealis\'ees d\`es qu'on a $\left|z\right|<\dfrac{1}{8\, N_{R}\left(u\right)T}$,
de sorte qu'on peut appliquer la premi\`ere partie pour, \`a la fois tenir
une solution $x=t+z\, L_{u}\left(u,z\right)\left(t\right)$ de l'\'equation
$t+z\, u\left(x\right)=x$ lorsque ce nombre $x$ appartient au segment
$J$, et obtenir une expression de $g\left(x\right)$, \`a savoir:
\begin{align*}
g\left(t+zL_{u}\left(u,z\right)\left(t\right)\right)&= L_{u}\left(g,z\right)\left(t\right)\\
&=\sum_{n=0}^{\infty}\dfrac{z^{n}}{n!}D^{n-1}\left(u^{n}Dg\right)\left(t\right)=g\left(t\right)+\sum_{n=1}^{\infty}\dfrac{z^{n}}{n!}D^{n-1}\left(u^{n}Dg\right)\left(t\right).
\end{align*}
\end{enumerate}
\end{proof}
\begin{rem}
Rappelons que les s\'eries enti\`eres \`a coefficients complexes ${\displaystyle \phi\left(w\right)=\sum_{k=0}^{\infty}c_{k}w^{k}}$,
de rayon de convergence infini op\`erent par composition sur les s\'eries
formelles ${\displaystyle T=\sum_{n=0}^{\infty}f_{n}X^{n}}$ \`a coefficients
dans l'anneau $\mathcal{C}^{\infty}\left(J\right)$ selon la formule: 
\[
\phi\left(T\right)=\sum_{n=0}^{\infty}\left(\sum_{k=0}^{\infty}c_{k}\left(\dfrac{1}{n!}D^{n}T^{k}\right)\left(0\right)\right)X^{n}.
\]
La formule \eqref{eq11} du corollaire \eqref{t10}, cons\'equence directe de la formule
du produit de Lagrange \eqref{eq10}, et quelques consid\'erations relativement
\'el\'ementaires de convergence dans l'espace de Fr\'echet $\mathcal{C}^{\infty}\left(J\right)$
montrent que pour tout couple $\left(\psi,f\right)$ de fonctions
de classe $\mathcal{C}^{\infty}$ sur le segment $J$ et pour toute
fonction $\phi$ somme de s\'erie enti\`ere \`a coefficients complexes de
rayon de convergence infini, on a la formule: 
\[
\phi\left(\sum_{n=0}^{\infty}\dfrac{1}{n!}D^{n-1}\left(\psi^{n}Df\right)X^{n}\right)=\sum_{n=0}^{\infty}\dfrac{1}{n!}D^{n-1}\left(\psi^{n}D\left(\phi\circ f\right)\right)X^{n}.
\]
\end{rem}

$\,$

\begin{thebibliography}{R\'ef\'erences}
\bibitem{F0}
 Adams E. P., Hippisley, R. L., 
 \newblock Smithsonian Mathematical Formulae and Tables of Elliptic
Functions, Smithsonian Miscellaneous CoIlections, Vol. 74, No. 1, Smithsonian Inst., Washington, D.C., 1922.

\bibitem{F1}
Frobenius, F. G. und Stickelberger, L.
\newblock \"Ueber die Differentiation
der elliptischen Functionen nach den Perioden und Invarianten,
\newblock {\em J.
Reine Angew. Math.} 92, 311-327 (1882).

\bibitem{F2}
Johnson, W. P. 
\newblock The Curious History of Fa\`a di Bruno's
Formula, 
\newblock{\em Amer. Math. Monthly} 109, 217-234
(2002)

\bibitem{F3}
Lagrange, J-L. 
\newblock Oeuvres compl\`etes, tome 8, Gauthier-Villars,
Paris (1869) http://gallica.bnf.fr/ark:/12148/bpt6k229943n/f272.image

\bibitem{F4}
Olver, P. J. 
\newblock A Nonlinear Differential Operator Series
That Commutes with Any Function. 
\newblock{\em SIAM J. Math. Anal.} 23, 209-221 (1992)

\bibitem{F5}
Whittaker, E. T. and Watson, G. N.
\newblock An Elementary Treatise
in Modern Analysis, 4th ed., Cambridge, England : Cambridge University
Press.

\end{thebibliography}
\end{document}